\newtheorem{thm}[equation]{Theorem}
\newtheorem{cor}[equation]{Corollary}
\newtheorem{lemma}[equation]{Lemma}
\newtheorem{prop}[equation]{Proposition}
\newtheorem*{conj*}{Conjecture}
\theoremstyle{definition}
\newtheorem{remark}[equation]{Remark}
\numberwithin{equation}{section}
\newcommand{\FF}{\mathbb{F}}  
\newcommand{\ZZ}{\mathbb{Z}}
\newcommand{\BB}{\mathsf{B}}
\newcommand\chara{\mathsf{char}}
\newcommand{\lb}[1]{\left[ #1\right]}
\newcommand{\pb}[1]{\left\{ #1\right\}}
\newcommand{\seq}[1]{\left( #1\right)}
\newcommand{\ann}{\hbox{\rm Ann}}
\newcommand\Z[1]{\mathsf{Z}^2_{\mathrm{#1}}}
\newcommand\B[1]{\mathsf{B}^2_{\mathrm{#1}}}
\newcommand\HH[1]{\mathsf{H}^2_{\mathrm{#1}}}
\newcommand\Aut{\operatorname{\mathsf{Aut}}}
\newcommand\orb{\operatorname{\mathsf{Orb}}}
\newcounter{marg}[section]
\title{Non-associative central extensions of null-filiform associative algebras\thanks{The first part of this work is supported by the Russian Science Foundation under grant 18-71-10007. 
The second part of this work was supported by MTM 2016-79661-P; FPU scholarship (Spain); and CMUP (UID/MAT/00144/2019), which is funded by FCT (Portugal) with national (MCTES) and European structural funds through the programs FEDER, under the partnership agreement PT2020.}}
\author[a, b]{Ivan Kaygorodov}
\author[c]{Samuel A.\ Lopes}
\author[d]{Pilar P\'{a}ez-Guill\'{a}n}
\affil[a]{CMCC, Universidade Federal do ABC, Santo Andr\'e, Brazil.}
\affil[b]{Siberian Federal University, Krasnoyarsk, Russia.}
\affil[c]{CMUP, Faculdade de Ci\^encias da
Universidade do Porto, Rua do Campo Alegre 687,
4169-007 Porto, Portugal.}
\affil[d]{IMAT, Universidade de Santiago de Compostela, Santiago de Compostela, Spain.}
\affil[ ]{Email addresses: \textit{kaygorodov.ivan@gmail.com, slopes@fc.up.pt, pilar.paez@usc.es}}
\date{}
\begin{document}  

\maketitle

\begin{abstract}
We give the algebraic classification of alternative, left alternative, Jordan, bicommutative, left commutative, assosymmetric, Novikov and left symmetric  central extensions of null-filiform associative algebras. 
\newline\newline
\textbf{MSC Numbers (2010)}: 
16D70, 16S70.
 \hfill \newline
\textbf{Keywords}: central extension, algebraic classification, nilpotent algebras, null-filiform algebra.
\end{abstract}

\section*{Introduction}\label{S:intro}

The subject of algebra extensions has long been a focal point of interest in mathematics and physics. In short, an algebra ${\rm A}$ is an extension of another algebra $ {\rm B}$ by ${\rm K}$ if there exists a short exact sequence $0\to {\rm K}\to {\rm A}\to {\rm B}\to 0$. The easiest example is the direct sum ${\rm B}\oplus {\rm K}$, along with the inclusion and projection maps.
Imposing new conditions on the extensions, we find important special types, such as the split extensions, the HNN-extensions, which have been used to prove numerous theorems of embeddability in different varieties of non-associative algebras \cite{L1,LS,wasserman}, and many others. In this paper, we will be concerned with central extensions, i.e.\ extensions in which the annihilator of ${\rm A}$ contains ${\rm K}$.
Several important algebras can be constructed as central extensions; for example, the Virasoro
algebra is the universal central extension of the Witt algebra, and the Heisenberg algebra is
a central extension of a commutative Lie algebra.

The algebraic study of central extensions of different varieties of non-associative algebras plays  an important role in the classification problem in such varieties (for example, see \cite{zusmanovich}). 
Since Skjelbred and Sund in 1978 \cite{ss78} devised a method for classifying nilpotent Lie algebras, making crucial use of central extensions, it has been adapted to many other varieties of algebras, including associative \cite{degr1},  Malcev \cite{hac16,hac18}, Jordan \cite{ha16,ha17}, Novikov \cite{kkk18}, anticommutative \cite{cfk182},
binary Lie \cite{ack} or bicommutative \cite{kpv19}, among others
(see, \cite{degr3,usefi1,degr2,fkkv19,gkk}).

The study of central extensions of null-filiform algebras was initiated  in \cite{omirov},
where all Leibniz central extensions of null-filiform Leibniz algebras were described. 
The associative central extensions of the associative null-filiform algebra of dimension $n$, which we will denote by $\mu_0^n$, were studied in \cite{kkl18} within the framework of associative algebras, and it was proven that the only associative central extension of $\mu_0^n$ is $\mu_0^{n+1}$. However, the null-filiform algebras can be considered as elements of more general varieties of algebras, such as alternative, left alternative, Jordan, bicommutative, left commutative, assosymmetric, Novikov or left symmetric, among others (note that the right alternative, right commutative and right symmetric cases are analogous to their respective left counterparts). In particular, it was proven in~\cite{burde,kpv19} that, up to isomorphism, the null-filiform algebra  $\mu_0^3$ admits the trivial extension $\mu_0^4$ and also another nontrivial bicommutative extension. Then, it is reasonable to wonder whether there will be nontrivial extensions in the aforementioned varieties of algebras.

Our main result is the classification of the isomorphism classes of central extensions
of the associative null-filiform algebra over several varieties of non-associative algebras, as the
ones mentioned above. This is in part summarized in Table~\ref{table1}, at the end of Section~\ref{S:lbc}.

The structure of the paper is as follows. 
Section 1 is devoted to formalizing the method of Skjelbred and Sund over any variety of non-associative algebras defined by a set of polynomial identities. 
Section 2 presents a quick review of null-filiform algebras. 
Respectively, Sections 3, 4 and 5 deal with left alternative and alternative, Jordan, and left commutative and bicommutative central extensions. 
Finally, Section 6 deals with the assosymmetric, Novikov and left symmetric cases, which happen to come out as a trivial corollary of the left commutative and bicommutative cases.

\medskip 

\paragraph{\bf Notation and underlying assumptions} Throughout the paper, $\FF$ will denote a field. Unless otherwise specified, this field will be arbitrary and all vector spaces, tensor products, (multi)linear maps and automorphism groups will be taken over $\FF$. Given a nonnegative integer $n$, $[n]$ will denote the set $\pb{i\in \ZZ\mid 1\leq i\leq n}$.
Also, we fix the following notation:
\[
\begin{array}{rl}
 {\rm AS}: & \mbox{variety of associative algebras;}\\
 {\rm AL}: & \mbox{variety of left alternative algebras;}\\
 {\rm J}: & \mbox{variety of Jordan algebras;}\\
 {\rm LC}: & \mbox{variety of left commutative algebras;}\\
  {\rm RC}: & \mbox{variety of right commutative algebras;}\\
  {\rm BC}: & \mbox{variety of bicommutative algebras.}
\end{array}
\]

%Unadorned $\ot$ will always mean $\ot_\FF$ and, unless otherwise noted, all vector spaces will be considered over $\FF$. Given vector spaces $U, V, W$, giving a bilinear map $U\times V\longrightarrow W$ is tantamount to giving a linear map $U\otimes V\longrightarrow W$ and we will identify these two approaches, using one or the other according to what seems to be clearer or more natural in the context where it is used.

%For any set $X$, $1_X$ will denote the identity map on $X$. 

%%%%%%%%%%%%%

%\newpage

\section{Non-associative central extensions}\label{S:general}

A variety ${\rm M}$ of non-associative algebras over $\FF$ is defined by a set of identities $\{E_i\}_{i\in I}$ of the form \[E_i\colon\ \sum_{j=1}^{t_{i}}p_{i,j}(z_1, \ldots, z_\ell)=0,
\]
where $Z=\pb{z_1, \ldots, z_\ell}$ is a finite alphabet, $p_{i, j}=p_{i, j}(z_1, \ldots, z_\ell)$ is a non-associative word in $Z$ of length $n_{i, j}\geq 2$, with a coefficient of either $1$ or $-1$ and $Z_{i,j}\subseteq Z$ is the set of letters which occur in $p_{i, j}$. 

As $n_{i,j}\geq 2$, each $p_{i,j}$ can be expressed uniquely as a product
\[p_{i,j}=p_{i,j}^1p_{i,j}^2,\]
where $Z_{i,j}=Z_{i,j}^1\cup Z_{i,j}^2$ and each $p_{i,j}^k$, with $k\in\{1,2\}$, either has length one or is another concatenation of products. Recursively, we will obtain $n_{i,j}-1$ factors $p_{i,j}^{\alpha}$ with $|Z_{i,j}^{\alpha}|=1$ and $\alpha$ of the form $\alpha=\alpha_1,\alpha_2,\alpha_3\ldots,\alpha_\ell$, with $\alpha_k\in\{1,2\}$ for all $k$.

Let ${\rm A}$ be an algebra in the variety ${\rm M}$, and let ${\rm V}$ be a vector space over $\FF$. Following the Skjelbred-Sund method, we introduce the \textit{cocycles} of ${\rm A}$ with respect to ${\rm V}$ as the bilinear maps $\theta\colon {\rm A}\times {\rm A}\longrightarrow {\rm V}$ satisfying the set of identities $\{\tilde{E}_i\}_{i\in I}$, where 
\[
\tilde{E}_i\colon\ \sum_{j=1}^{t_{i}}\theta(p_{i,j}^1,p_{i,j}^2)=0.\] These elements form a vector space over $\mathbb{F}$, which we will denote by $\Z{{\rm M}}({\rm A},{\rm V})$.

We also define the \textit{coboundaries} of ${\rm A}$ with respect to ${\rm V}$ as follows. Let $f$ be a linear map from ${\rm A}$ to ${\rm V}$, and set $\delta f\colon {\rm A}\times {\rm A}\longrightarrow {\rm V}$ with $(\delta f)(x,y)=f(xy)$. It is clear from
\[
\sum_{j=1}^{t_i}\delta f(p_{i,j}^1,p_{i,j}^2)=\sum_{j=1}^{t_i}f(p_{i,j})=f(0)=0,\]
for all $i\in I$, that $\delta f\in \Z{{\rm M}}({\rm A},{\rm V})$. 
We define $\B{}  ({\rm A},{\rm V})=\{\delta f :   f\in {\rm Hom}( {\rm A},{\rm V} )\}$;  it is a linear subspace of $\Z{M}({\rm A},{\rm V})$.  The quotient space $\Z{M}({\rm A},{\rm V}) / \B{}({\rm A},{\rm V})$ is called the {\it second cohomology} space and is denoted by $\HH{M}({\rm A},{\rm V})$.

Consider the group $\Aut({\rm A})$ of automorphisms of the algebra ${\rm A}$. If $\theta$ is a cocycle and $\phi\in\Aut({\rm A})$, we define $\phi\cdot\theta : {\rm A} \times {\rm A} \longrightarrow {\rm V}$ by $\phi\cdot\theta(x,y)=\theta (\phi(x),\phi(y))$. The automorphism $\phi$ preserves the product, so $\phi(p_{i,j}(x_1,\dots,x_{\ell}))=p_{i,j}(\phi(x_1),\dots,\phi(x_{\ell}))$; from this, we have that $\phi\cdot\theta\in\Z{M}({\rm A},{\rm V})$. This induces an action of $\Aut( {\rm A})$ on $\Z{M}({\rm A},{\rm V})$. Moreover, $\theta=\delta f\iff \phi\cdot\theta=\delta(f\circ\phi)$, so the action is inherited by the quotient $\HH{M}({\rm A},{\rm V})$. Although this is a right action, we will write it on the left to follow the usual convention. The orbit of an element $\theta\in\Z{M}({\rm A},{\rm V})$ will be denoted by $\orb{(\theta)}$, and the orbit of $[\theta]\in\HH{M}({\rm A},{\rm V})$, by $\orb{([\theta])}$.

For every bilinear map $\theta\colon {\rm A} \times {\rm A} \longrightarrow {\rm V}$, we can define the algebra ${\rm A}_{\theta}={\rm A}\oplus {\rm V}$ with the product $[x+v,y+w]_{\theta}=xy+\theta(x,y)$.

\begin{lemma}
The algebra ${\rm A}_{\theta}$ belongs to the variety ${\rm M}$ if and only if $\theta\in\Z{M}({\rm A},{\rm V}).$
\end{lemma}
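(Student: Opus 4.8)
The plan is to exploit the fact that ${\rm V}$ lies in the annihilator of ${\rm A}_\theta$, so that it behaves as a central summand on which every multiplication other than the outermost one is invisible. First I would record the basic computation: for $a = x+v$ and $b = y+w$ in ${\rm A}_\theta$, the product $[a,b]_\theta = xy + \theta(x,y)$ depends only on the ${\rm A}$-components $x,y$, with ${\rm A}$-component $xy$ and ${\rm V}$-component $\theta(x,y)$. Writing $\pi_{\rm A}$ and $\pi_{\rm V}$ for the projections of ${\rm A}_\theta$ onto ${\rm A}$ and ${\rm V}$, this says $\pi_{\rm A}([a,b]_\theta) = \pi_{\rm A}(a)\,\pi_{\rm A}(b)$ and $\pi_{\rm V}([a,b]_\theta) = \theta(\pi_{\rm A}(a),\pi_{\rm A}(b))$.

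The key lemma, proved by induction on the length of a word, is: for any non-associative word $w = w(z_1,\dots,z_\ell)$ and any substitution $z_k \mapsto x_k + v_k$, the ${\rm A}$-component of the value of $w$ in ${\rm A}_\theta$ equals $w(x_1,\dots,x_\ell)$ computed in ${\rm A}$. This is immediate from the formula above, since the ${\rm A}$-component of a product is the product of the ${\rm A}$-components; the ${\rm V}$-contributions of all proper subwords are therefore discarded at each successive multiplication. Consequently, for a word $p_{i,j}$ with top-level factorization $p_{i,j} = p_{i,j}^1 p_{i,j}^2$, its ${\rm V}$-component when evaluated in ${\rm A}_\theta$ is exactly $\theta\big(p_{i,j}^1(x), p_{i,j}^2(x)\big)$, where I abbreviate $x = (x_1,\dots,x_\ell)$ and each $p_{i,j}^k(x)$ is evaluated in ${\rm A}$.

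With this in hand, I would evaluate each defining identity $E_i$ on arbitrary elements $x_k + v_k$ of ${\rm A}_\theta$ and split the result into its ${\rm A}$- and ${\rm V}$-components. The ${\rm A}$-component is $\sum_j p_{i,j}(x)$, which vanishes because ${\rm A}\in{\rm M}$; the ${\rm V}$-component is $\sum_j \theta(p_{i,j}^1(x),p_{i,j}^2(x))$, which is precisely the left-hand side of $\tilde{E}_i$ evaluated at $x$. Since the ${\rm V}$-components are independent of the $v_k$, the identity $E_i$ holds in ${\rm A}_\theta$ for all inputs if and only if $\sum_j \theta(p_{i,j}^1(x),p_{i,j}^2(x)) = 0$ for all $x\in {\rm A}^{\ell}$, i.e.\ if and only if $\theta$ satisfies $\tilde{E}_i$. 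Quantifying over $i\in I$ yields ${\rm A}_\theta\in{\rm M} \iff \theta\in\Z{M}({\rm A},{\rm V})$, and both implications follow at once.

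The computation is otherwise routine; the one point deserving care is the bookkeeping in the inductive lemma, namely verifying that the ${\rm V}$-values produced inside proper subwords are genuinely annihilated by the subsequent multiplications, which is exactly what leaves only the outermost application of $\theta$ surviving. Keeping the recursive factorization $p_{i,j} = p_{i,j}^1 p_{i,j}^2$ into its $n_{i,j}-1$ one-letter factors (as set up earlier in this section) aligned with this induction is where I would be most careful to keep the argument precise.
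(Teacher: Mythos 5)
Your proposal is correct and follows essentially the same route as the paper: an induction showing that $p_{i,j}(X+V)=p_{i,j}(X)+\theta\big(p^1_{i,j}(X),p^2_{i,j}(X)\big)$, followed by splitting each identity $E_i$ into its ${\rm A}$- and ${\rm V}$-components and using that ${\rm A}\in{\rm M}$ kills the former. Your phrasing of the inductive step purely in terms of the ${\rm A}$-component of subwords is a slightly cleaner way to organize the same bookkeeping, but it is not a different argument.
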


\begin{proof}
By definition, ${\rm A}_{\theta}$ belongs to ${\rm M}$ if it satisfies the identities $\{E_i\}_{i\in I}$,
i.e., if it satisfies
\[\sum_{j=1}^{t_{i}}p_{i,j}(x_1+v_1,\dots,x_{\ell}+v_{\ell})=0,\]
for all $i\in I$ and all $x_k\in{\rm A}$, $v_k\in{\rm V}$, with $k\in[\ell]$. For the sake of brevity, we will write
\[\sum_{j=1}^{t_{i}}p_{i,j}(X+V)=0;\]
also, when we make reference to these identities in the algebra ${\rm A}$, we will write
\[\sum_{j=1}^{t_{i}}p_{i,j}(X)=0.\]
It holds that 
\[p_{i,j}(X+V)=[p^{1}_{i,j}(X+V),p^{2}_{i,j}(X+V)]_{\theta}.\]
An easy induction in the cardinal $|Z_{i,j}|$ shows that \[p_{i,j}(X+V)=p_{i,j}(X)+\theta(p^1_{i,j}(X),p^2_{i,j}(X)).\]
As ${\rm A}$ belongs to the variety ${\rm M}$, it follows that ${\rm A}_{\theta}$ satisfies the identities $\{E_i\}_{i\in I}$ if and only if $\theta$ satisfies $\{\tilde{E}_i\}_{i\in I}$.
\end{proof}

In all that follows, we will consider ${\rm A}_{\theta}$ just for $\theta\in\Z{M}({\rm A}, {\rm V})$. Then, it is easy to check that  the algebra ${\rm A}_{\theta}$ of the variety ${\rm M}$ is a central extension of ${\rm A}$ by $V$. We define the dimension of the extension as the dimension of ${\rm V}.$

The particular identities of the variety ${\rm M}$ are not involved any more in the development of the method. Thus, we will just give a brief overview of the main definitions and results, and refer the reader to other texts in which this method is carefully detailed, such as \cite{ha16, hac16}.

One fundamental definition is the {\it annihilator} of $\theta$, 
$\ann(\theta)=\{x\in{\rm A}\mid \theta(x,{\rm A})+\theta ({\rm A}, x)=0\}$. Recalling that the annihilator of an algebra ${\rm A}$ is $\ann ({\rm A})=\{x\in{\rm A}\mid x{\rm A }+ {\rm A}x=0 \}$, it is clear that $\ann({\rm A}_{\theta})=(\ann({\rm A})\cap \ann(\theta))\oplus {\rm V}$. 

Other important notions are the following. 
Let ${\rm A}$ be an algebra and ${\rm I}$ be a nonzero subspace of $\operatorname{Ann}({\rm A})$. If ${\rm A}={\rm A}_0 \oplus {\rm I}$ for some ideal ${\rm A}_0$,
then ${\rm I}$ is called an {\it annihilator component} of ${\rm A}$.
The central extensions of ${\rm A}$ with no annihilator component are called {\it non-split central extensions}.

We show now that any algebra ${\rm A}$ of the variety ${\rm M}$ with nonzero annihilator is isomorphic to a central extension of some other suitable algebra ${\rm A'}$ in ${\rm M}$. The demonstration can be found in~\cite[Lemma 5]{hac16}.

\begin{lemma}\label{lm:first}
Let ${\rm A}$ be an $n$-dimensional algebra in the variety ${\rm M}$ such that $\dim(\ann({\rm A}))=s\neq0$. Then there exist, up to isomorphism, a unique $(n-s)$-dimensional algebra ${\rm A}'$ in $\rm M$ and a cocycle $\theta \in \Z{M}({\rm A}, {\rm V})$ for some vector space ${\rm V}$ of dimension $s$, such that $\ann({\rm A})\cap\ann(\theta)=0$,  ${\rm A} \cong {{\rm A}'}_{\theta}$ and
 ${\rm A}/\ann({\rm A})\cong {\rm A}'$.
\end{lemma}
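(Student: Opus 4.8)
The plan is to realise ${\rm A}$ explicitly as a central extension of its own quotient by the annihilator, so that the construction is essentially forced. First I would set ${\rm V}=\ann({\rm A})$, which has dimension $s$, and ${\rm A}'={\rm A}/{\rm V}$; since ${\rm M}$ is defined by identities it is closed under quotients, so ${\rm A}'\in{\rm M}$ and $\dim{\rm A}'=n-s$. Choosing a linear section of the canonical projection $\pi\colon{\rm A}\to{\rm A}'$ amounts to fixing a vector-space complement ${\rm W}$ with ${\rm A}={\rm W}\oplus{\rm V}$. Because ${\rm V}=\ann({\rm A})$ annihilates all products, the entire multiplication of ${\rm A}$ is determined by its restriction to ${\rm W}$: for $x,y\in{\rm W}$ one has $xy=(xy)_{\rm W}+(xy)_{\rm V}$, where the subscripts denote the projections onto ${\rm W}$ and ${\rm V}$. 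Identifying ${\rm A}'$ with ${\rm W}$ through the section, the product of ${\rm A}'$ is $x\cdot y=(xy)_{\rm W}$, and I would define the cocycle candidate $\theta\colon{\rm A}'\times{\rm A}'\to{\rm V}$ by $\theta(x,y)=(xy)_{\rm V}$.

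With this data the map that is the identity on ${\rm W}\oplus{\rm V}$ is an algebra isomorphism ${\rm A}'_{\theta}\cong{\rm A}$, since $[x+v,y+w]_\theta=x\cdot y+\theta(x,y)=(xy)_{\rm W}+(xy)_{\rm V}=xy$. From ${\rm A}'_{\theta}\cong{\rm A}\in{\rm M}$ together with the preceding lemma I then conclude immediately that $\theta\in\Z{M}({\rm A}',{\rm V})$, so no separate verification of the cocycle identities $\{\tilde{E}_i\}$ is needed. This settles existence, and since ${\rm A}'$ was \emph{defined} as ${\rm A}/\ann({\rm A})$, the relation ${\rm A}/\ann({\rm A})\cong{\rm A}'$ is automatic.

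For the nondegeneracy condition I would invoke the annihilator formula established above, namely $\ann({\rm A}'_{\theta})=(\ann({\rm A}')\cap\ann(\theta))\oplus{\rm V}$. Transporting it through the isomorphism ${\rm A}'_{\theta}\cong{\rm A}$ gives $\ann({\rm A})\cong(\ann({\rm A}')\cap\ann(\theta))\oplus{\rm V}$; comparing dimensions and recalling $\dim\ann({\rm A})=s=\dim{\rm V}$ forces $\ann({\rm A}')\cap\ann(\theta)=0$.

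Uniqueness is where I would be most careful. Suppose ${\rm A}\cong{\rm B}_{\psi}$ for some ${\rm B}\in{\rm M}$ and some $\psi\in\Z{M}({\rm B},{\rm U})$ with $\dim{\rm U}=s$ and $\ann({\rm B})\cap\ann(\psi)=0$. The same annihilator formula yields $\ann({\rm A})\cong\ann({\rm B}_{\psi})=(\ann({\rm B})\cap\ann(\psi))\oplus{\rm U}={\rm U}$, whence ${\rm B}\cong{\rm B}_{\psi}/{\rm U}\cong{\rm A}/\ann({\rm A})\cong{\rm A}'$, so the $(n-s)$-dimensional algebra is pinned down up to isomorphism as ${\rm A}/\ann({\rm A})$. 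I expect the only real obstacle to be bookkeeping: keeping the two products (the one on ${\rm W}$ versus the one on ${\rm A}'$) and the two annihilators (of ${\rm A}'$ versus of $\theta$) cleanly separated. All the genuine content is carried by three already-available facts — the closure of ${\rm M}$ under quotients, the isomorphism ${\rm A}'_{\theta}\cong{\rm A}$, and the annihilator identity — with no step that is genuinely hard.
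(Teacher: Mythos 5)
Your proof is correct and follows essentially the same route as the argument the paper relies on (it defers to \cite[Lemma 5]{hac16}, whose proof is exactly this construction): split ${\rm A}={\rm W}\oplus\ann({\rm A})$, read off the product on ${\rm W}\cong{\rm A}/\ann({\rm A})$ and the $\ann({\rm A})$-component as the cocycle, and use the annihilator identity $\ann({\rm A}_\theta)=(\ann({\rm A})\cap\ann(\theta))\oplus{\rm V}$ for both the nondegeneracy condition and the uniqueness of ${\rm A}'$. Your use of the first lemma of Section~1 to avoid re-verifying the cocycle identities is a clean and legitimate shortcut.
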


Then, in order to decide when two algebras with nonzero annihilator of the variety $\rm M$ are isomorphic, it suffices to find criteria in terms of the cocycles.

Let us fix a basis $\{e_{1},\ldots ,e_{s}\}$ of ${\rm V}$. For every algebra $A$ and every cocycle $\theta\in \Z{M}({\rm A},{\rm V})$, there exist $s$ unique cocycles $\theta_i\in \Z{M}({\rm A},{\FF})$, $i\in[s]$, such that $\theta(x,y)=\sum_{i=1}^{s}\theta_i(x,y)e_i$, for all $x,y\in{\rm A}$. Straightforward facts about these cocycles are that $\theta\in \BB^2({\rm A},{\rm V})$ if and only if $\theta_i\in \BB^2({\rm A},{\FF})$ for $i\in [s]$, and that $\ann (\theta)=\ann(\theta_i)\cap\dots\cap \ann(\theta_s)$.
Also, it is not difficult to prove (see \cite[Lemma 13]{hac16}) that if $\ann(\rm A)\cap \ann (\theta)=0$, then ${\rm A}_{\theta}$ has an annihilator component if and only if $[\theta_1],\dots,[\theta_s]$ are linearly dependent in $\HH{M}({\rm A},{\FF})$. In other words, if $\ann(\rm A)\cap \ann (\theta)=0$, then ${\rm A}_{\theta}$ is a non-split central extension of ${\rm A}$ if and only if $\langle [\theta_1],\dots,[\theta_s] \rangle \in G_s(\HH{M}({\rm A},{\FF}))$, where the Grassmanian $G_s(\HH{M}({\rm A},{\FF}))$ is the set of $s$-dimensional linear subspaces of $\HH{M}({\rm A},{\FF})$.
Finally (see \cite[Lemma 15]{hac16}), if $\langle \left[ \theta _{1}\right],\dots,
\left[ \theta _{s}\right]\rangle=\langle \left[ \vartheta
_{1}\right],\dots,\left[ \vartheta _{s}\right]
\rangle \in G_s(\HH{M}({\rm A},{\FF}))$, we have that $ \bigcap\limits_{i=1}^{s}\operatorname{Ann}(\theta _{i})\cap \operatorname{Ann}\left( {\rm A}\right) = \bigcap\limits_{i=1}^{s}
\operatorname{Ann}(\vartheta _{i})\cap\operatorname{Ann}( {\rm A}) $, and therefore the set 
\[
T_{s}({\rm A}) =\{ W=\left\langle \left[ \theta _{1}\right] ,\dots,\left[ \theta _{s}\right] \right\rangle \in
G_s(\HH{M}({\rm A},{\FF})) \mid \bigcap\limits_{i=1}^{s}\ann(\theta _{i})\cap\ann({\rm A}) =0\}
\]
is well-defined. Then, the set of all non-split central extensions of ${\rm A}$ with $s$-dimensional annihilator, by some $s$-dimensional vector space ${\rm V}$, can be written as 
\[
E( {\rm A},{\rm V}) =\{ {\rm A}_{\theta }\mid\theta( x,y) = \sum_{i=1}^{s}\theta _{i}( x,y) e_{i} \ \ \text{and} \ \ \langle \left[ \theta _{1}\right],\dots,
\left[ \theta _{s}\right] \rangle \in T_{s}({\rm A})\} .
\]

Now, if $\phi$ is an automorphism of ${\rm A}$ and $W=\langle
\left[ \theta _{1}\right],\dots,\left[ \theta _{s}
\right]\rangle\in G_s(\HH{M}({\rm A},{\FF}))$ it holds that $\phi W=\langle
\left[ \phi\theta _{1}\right],\dots,\left[ \phi\theta _{s}
\right]\rangle$ also has dimension $s$. This induces an action of $\Aut{{\rm A}}$ on $G_s(\HH{M}({\rm A},{\FF}))$; the orbit of $W$ will be denoted by $\orb{(W)}$. The set $T_{s}({\rm A})$ is stable under this action (\cite[Lemma 16]{hac16}).

Having established these results, we can determine whether two $s$-dimensional non-split central extensions ${\rm A}_{\theta}$ and ${\rm A}_{\vartheta}$ in the variety ${\rm M}$ are isomorphic or not. For the proof, see~\cite[Lemma 17]{hac16}.

\begin{lemma}
 Let ${\rm A}_{\theta },{\rm A}_{\vartheta }\in E( {\rm A},{\rm V}) $. Suppose that $\theta ( x,y) =  \displaystyle \sum_{i=1}^{s}
\theta _{i}( x,y) e_{i}$ and $\vartheta ( x,y) =
\displaystyle \sum_{i=1}^{s} \vartheta _{i}( x,y) e_{i}$.
Then the algebras ${\rm A}_{\theta }$ and ${\rm A}_{\vartheta } $ of the variety ${M}$ are isomorphic
if and only if
\[\orb(\langle \left[ \theta _{1}\right] ,
\dots,\left[ \theta _{s}\right] \rangle) =
\orb (\langle \left[ \vartheta _{1}\right] ,\dots,\left[ \vartheta _{s}\right]\rangle).\]
\end{lemma}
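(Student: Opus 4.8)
The plan is to reduce the isomorphism question to the cohomological data by exploiting that the copy of ${\rm V}$ is intrinsically recognisable inside each extension. First I would note that since ${\rm A}_\theta,{\rm A}_\vartheta\in E({\rm A},{\rm V})$ we have $\ann({\rm A})\cap\ann(\theta)=0$, so the formula $\ann({\rm A}_\theta)=(\ann({\rm A})\cap\ann(\theta))\oplus{\rm V}$ recorded above gives $\ann({\rm A}_\theta)={\rm V}=\ann({\rm A}_\vartheta)$. Any algebra isomorphism $\psi\colon{\rm A}_\theta\to{\rm A}_\vartheta$ must carry annihilator onto annihilator, hence $\psi({\rm V})={\rm V}$, and $\tau:=\psi|_{\rm V}$ is a linear automorphism of ${\rm V}$. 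Moreover $\psi$ descends to an isomorphism of the quotients ${\rm A}_\theta/{\rm V}$ and ${\rm A}_\vartheta/{\rm V}$; since the $\theta$- and $\vartheta$-twists land in ${\rm V}$, both quotients are canonically identified with ${\rm A}$ carrying its original product. Thus $\psi$ induces an automorphism $\phi\in\Aut({\rm A})$, and relative to the decomposition ${\rm A}\oplus{\rm V}$ it has the triangular shape $\psi(x+v)=\phi(x)+g(x)+\tau(v)$ for a unique linear map $g\colon{\rm A}\to{\rm V}$.

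Next I would convert the statement ``$\psi$ is a homomorphism'' into an identity in cohomology. Expanding $\psi([x+v,y+w]_\theta)=[\psi(x+v),\psi(y+w)]_\vartheta$ and comparing ${\rm A}$- and ${\rm V}$-components, the ${\rm A}$-component is automatic (it only records $\phi(xy)=\phi(x)\phi(y)$), whereas the ${\rm V}$-component yields the single condition $\tau\circ\theta=\phi\cdot\vartheta-\delta g$. Decomposing along the fixed basis $\{e_1,\dots,e_s\}$ of ${\rm V}$, writing $\tau(e_i)=\sum_k\tau_{ki}e_k$ and $g=\sum_k g_ke_k$, this becomes for each $k$ the scalar identity $\sum_i\tau_{ki}\theta_i=\phi\cdot\vartheta_k-\delta g_k$, that is, in $\HH{M}({\rm A},\FF)$,
\[
\phi[\vartheta_k]=\sum_{i=1}^{s}\tau_{ki}[\theta_i].
\]
As the matrix $(\tau_{ki})$ is invertible, this asserts exactly that $\phi\langle[\vartheta_1],\dots,[\vartheta_s]\rangle=\langle[\theta_1],\dots,[\theta_s]\rangle$, i.e.\ the two spans lie in the same $\Aut({\rm A})$-orbit.

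The two implications then fall out of this dictionary. For necessity, an arbitrary isomorphism produces data $(\phi,\tau,g)$ satisfying the displayed identity, forcing the orbits to coincide. For sufficiency I would run the construction in reverse: given $\phi\in\Aut({\rm A})$ with $\phi\langle[\vartheta_1],\dots,[\vartheta_s]\rangle=\langle[\theta_1],\dots,[\theta_s]\rangle$, the invertibility of the induced map on spans supplies an invertible matrix $(\tau_{ki})$ and functionals $g_k$ realising $\phi[\vartheta_k]=\sum_i\tau_{ki}[\theta_i]$ at the cochain level; defining $\tau$ and $g$ accordingly and setting $\psi(x+v)=\phi(x)+g(x)+\tau(v)$ gives a bijective map (its diagonal blocks $\phi,\tau$ are invertible) which, by the very same component computation, is an algebra homomorphism, hence an isomorphism ${\rm A}_\theta\cong{\rm A}_\vartheta$. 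I expect the main obstacle to be the bookkeeping of the first paragraph: one must verify that \emph{every} isomorphism is forced into the triangular form relative to ${\rm V}$, which rests entirely on the identification $\ann({\rm A}_\theta)={\rm V}$, and then track components carefully enough that the homomorphism condition collapses to the single cohomological identity above. Once that is in place, the orbit reformulation and both directions are essentially formal.
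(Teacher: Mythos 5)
Your proof is correct and follows essentially the same route as the argument the paper relies on (it defers the proof to \cite[Lemma 17]{hac16}): use $\ann({\rm A})\cap\ann(\theta)=0$ to identify ${\rm V}$ with $\ann({\rm A}_\theta)$, force any isomorphism into triangular form $\psi(x+v)=\phi(x)+g(x)+\tau(v)$, and reduce the homomorphism condition to $\phi\cdot[\vartheta_k]=\sum_i\tau_{ki}[\theta_i]$ with $(\tau_{ki})$ invertible, which is exactly the orbit equality. No gaps.
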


Therefore, given an algebra ${\rm A}$ of dimension $n-s$ in the variety ${\rm M}$, we can construct all non-split central extensions ${\rm A}_{\theta}$ of ${\rm A}$ according to the following procedure:

\begin{enumerate}
\item Determine $\HH{M}({\rm A},{\FF})$, ${\rm Ann}({\rm A})$ and $\Aut{\rm A}$.

\item Determine the set of ($\Aut{\rm A}$)-orbits on $T_{s}({\rm A}) $.

\item For each orbit, construct the algebra of the variety ${\rm M}$ associated with a
representative of it.
\end{enumerate}

Then, thanks to this procedure and to Lemma~\ref{lm:first}, we can classify all the nilpotent algebras of dimension $n$ of a variety ${\rm M}$, provided that the nilpotent algebras of dimension $n-1$ are known.

Let ${\rm A}$ be an  algebra and fix a basis $\{e_{1},e_{2},\dots,e_{n}\}$ of ${\rm A}$. We define the bilinear form
$\Delta _{i,j} \colon {\rm A}\times {\rm A}\longrightarrow \mathbb F$
by $\Delta_{i,j}\left( e_{l},e_{m}\right) = \delta_{i,l}\delta_{j,m}$.
Then the set $\left\{ \Delta_{i,j}\mid i, j\in [n]\right\} $ is a basis for the linear space of bilinear forms on ${\rm A}$; in particular, every $\theta \in
{\rm Z^{2}}\left( {\rm A},{\bf V}\right) $ can be uniquely written as $
\theta = \sum_{ i,j=1}^n c_{i,j}\Delta _{i,j}$, with $
c_{i,j}\in \mathbb F$.

%\newpage
\section{Null-filiform associative algebras}\label{S:nfaa}

For an algebra ${\rm A}$ of an arbitrary variety ${\rm M}$, we consider the series
\[{\rm A}^1={\rm A}, \qquad \ {\rm A}^{i+1}=\sum\limits_{k=1}^{i}{\rm A}^k {\rm A}^{i+1-k}, \qquad i\geq 1.\]
%We say that  an  algebra ${\rm A}$ is \emph{nilpotent} if ${\rm A}^{i}=0$ for some $i \in \mathbb{N}$. The smallest integer satisfying ${\rm A}^{i}=0$ is called the  \emph{index of nilpotency} of ${\rm A}$.
An $n$-dimensional algebra ${\rm A}$ is called {\it null-filiform} if $\dim {\rm A}^i=(n+ 1)-i,  \ i\in [n+1].$
All null-filiform associative algebras were described in  \cite{karel}.

\begin{thm}\label{T:nfaa} An arbitrary $n$-dimensional null-filiform associative algebra is isomorphic to the algebra:
\[\mu_0^n : \quad e_i e_j= e_{i+j}, \quad i, j\in [n],\]
where $\{ e_1, e_2, \dots, e_n\}$ is a basis of the algebra $\mu_0^n$ and we define $e_m=0$ for all $m>n$.
\end{thm}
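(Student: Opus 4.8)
The plan is to prove the theorem in two stages: first, that the stated multiplication $e_ie_j=e_{i+j}$ (with $e_m=0$ for $m>n$) genuinely defines a null-filiform associative algebra, and second, that any $n$-dimensional null-filiform associative algebra is isomorphic to it. The forward direction is a routine verification: associativity of $\mu_0^n$ follows because $(e_ie_j)e_k=e_{i+j+k}=e_i(e_je_k)$, interpreting all out-of-range indices as zero, and the filtration satisfies $\dim(\mu_0^n)^i=n+1-i$ because $(\mu_0^n)^i=\spann\{e_i,e_{i+1},\dots,e_n\}$. This settles that the algebra exists and lies in the correct class.

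The substance is the converse. Given an arbitrary $n$-dimensional null-filiform associative algebra $\A$, the defining condition $\dim\A^i=(n+1)-i$ forces a complete flag $\A=\A^1\supsetneq\A^2\supsetneq\cdots\supsetneq\A^n\supsetneq\A^{n+1}=0$ with each quotient one-dimensional; in particular $\A$ is nilpotent of nilpotency index exactly $n$, and $\A^n$ is a one-dimensional ideal annihilated by $\A$. The strategy is to choose a generator adapted to this flag. First I would pick $f_1\in\A\setminus\A^2$; then, since $\A$ is generated by $\A\setminus\A^2$ and $\dim(\A^i/\A^{i+1})=1$ for every $i$, a straightforward induction shows that $f_1^i\in\A^i\setminus\A^{i+1}$ for each $i\in[n]$, so that $\{f_1,f_1^2,\dots,f_1^n\}$ is a basis of $\A$ and $f_1^{n+1}=0$. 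Setting $e_i=f_1^{\,i}$ then yields $e_ie_j=f_1^{\,i}f_1^{\,j}=f_1^{\,i+j}=e_{i+j}$, which is precisely the multiplication table of $\mu_0^n$.

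The key technical point, and the step I expect to be the main obstacle, is justifying that $f_1^i$ does not fall into the deeper layer $\A^{i+1}$, i.e.\ that the single-generator powers exhaust the flag. This requires using associativity to argue that $\A^i$ is spanned by products of $i$ generators and then leveraging $\dim(\A^i/\A^{i+1})=1$ to conclude that each such layer is represented by the power $f_1^i$; one must verify that $f_1^i\notin\A^{i+1}$, for otherwise the filtration would collapse prematurely and contradict $\dim\A^i=(n+1)-i$. Care is also needed at the top of the flag to ensure $f_1^n\neq0$ while $f_1^{n+1}=0$, which follows from $\dim\A^n=1$ and $\dim\A^{n+1}=0$. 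Since this structural result is already established in the literature (it is the content of \cite{karel}), I would either cite it directly or, for completeness, include the short inductive argument just sketched, emphasizing that the hard work is entirely contained in showing the powers of a single generator form a basis compatible with the lower central-type series.
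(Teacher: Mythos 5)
The paper does not prove this statement at all: Theorem~\ref{T:nfaa} is quoted as a known classification from~\cite{karel}, so there is no in-paper argument to compare against. Your proposed proof is correct and is essentially the standard argument one would find in the source: the verification that $\mu_0^n$ is associative and null-filiform is routine, and the converse works exactly as you describe. The one step worth writing out fully is the induction: from $\mathrm{A}^k\mathrm{A}^m\subseteq \mathrm{A}^{k+m}$ (immediate from the definition of the series) and the inductive hypothesis $\mathrm{A}^k=\FF f_1^k+\mathrm{A}^{k+1}$ for $k\leq i$, one gets $\mathrm{A}^{i+1}=\sum_{k=1}^{i}\mathrm{A}^k\mathrm{A}^{i+1-k}\subseteq \FF f_1^{i+1}+\mathrm{A}^{i+2}$, and then $f_1^{i+1}\notin\mathrm{A}^{i+2}$ precisely because $\dim(\mathrm{A}^{i+1}/\mathrm{A}^{i+2})=1$ would otherwise be violated --- which is the collapse you flag. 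Note that associativity is used twice: once so that $f_1^{i}$ is unambiguous, and once so that all length-$(i+1)$ products reduce modulo $\mathrm{A}^{i+2}$ to a multiple of $f_1^{i+1}$. Your sketch identifies both the key step and the correct reason it works, so citing~\cite{karel} or including this short induction are both acceptable ways to finish.
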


Using the procedure explained in Section~\ref{S:general}, we can easy find all associative central extensions of $\mu_0^n$.
Let $\nabla_ j = \sum\limits_{k=1}^{j}\Delta_{k,j+1-k}$, for $j\in[n]$.
We need the following result from~\cite{kkl18}.
\begin{prop} Let $\mu_0^n$ be the null-filiform associative algebra of dimension $n$. Then
\[\Z{AS}(\mu_0^n,\FF)=\langle \nabla_j \mid  j\in [n]\rangle,\quad 
\B{}(\mu_0^n,\FF)=\langle\nabla_j \mid j\in [n-1]\rangle,\]
\[\HH{AS}(\mu_0^n,\FF)=\Z{A}(\mu_0^n,\FF)/\B{}(\mu_0^n,\FF)=\langle [ \nabla_{n}] \rangle.\]
%\[\phi_0^n=
%\begin{pmatrix}
% a_{1,1} &  &   & &  \\
%   & a_{1,1}^2 & &0  &  \\
%  &  & a_{1,1}^3 &  &  \\
%  &  \ast & & \ddots &  \\
%  &  &  &  & a_{1,1}^n
%  \end{pmatrix},
%   \quad \phi_0^n\in \Aut(\mu_0^n).\]
%where ${\rm A}$ denotes de variety of associative algebras.
\end{prop}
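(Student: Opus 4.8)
The plan is to turn each of the three spaces into an explicit linear-algebra problem in the coefficients of a bilinear form and solve it. Specializing the general cocycle equation $\tilde{E}$ to the associativity identity $(z_1z_2)z_3 - z_1(z_2z_3)=0$ (whose factors are $p^1=z_1z_2,\ p^2=z_3$ and $p^1=z_1,\ p^2=z_2z_3$) gives the single condition $\theta(xy,z)=\theta(x,yz)$ for all $x,y,z\in\mu_0^n$. Writing $\theta=\sum_{i,j=1}^n c_{i,j}\Delta_{i,j}$, evaluating on basis vectors, and using $e_ie_j=e_{i+j}$ with $e_m=0$ for $m>n$, this becomes
\[c_{a+b,c}=c_{a,b+c}\qquad\text{for all } a,b,c\in[n],\]
under the convention that $c_{p,q}=0$ whenever $p>n$ or $q>n$. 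The whole proposition reduces to solving this system and then computing coboundaries by hand.

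First I would extract two features of the system. Substituting $a=i-1,\ b=1,\ c=j$ (for $2\le i\le n$, $1\le j\le n-1$) yields $c_{i,j}=c_{i-1,j+1}$, so the entries are constant along each antidiagonal $i+j=m$; a short induction down the antidiagonal (taking care that the corner $(1,n)$ is reached from $(2,n-1)$, where the chaining relation is still applicable) shows that all entries on a fixed antidiagonal agree. Since both sides of the cocycle equation always have index-sum $a+b+c$, distinct antidiagonals never interact, so each common value is a free parameter unless forced to vanish. The vanishing comes from the boundary substitutions in which exactly one of $a+b$, $b+c$ exceeds $n$: the surviving side is then a genuine coefficient equated to $0$, and choosing $a,b,c$ appropriately forces $c_{i,j}=0$ for every $(i,j)$ with $i+j\ge n+2$. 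Combining, $\theta$ is a cocycle exactly when it is constant on each antidiagonal $i+j=m$ with $2\le m\le n+1$ and vanishes above. As the antidiagonal $i+j=m$ (for $m\le n+1$) is precisely $\nabla_{m-1}$, this gives $\Z{AS}(\mu_0^n,\FF)=\langle\nabla_j\mid j\in[n]\rangle$, the $\nabla_j$ being independent since they are supported on distinct antidiagonals.

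For the coboundaries a direct computation is enough: for linear $f$ with $f(e_k)=f_k$ one has $\delta f(e_i,e_j)=f(e_ie_j)=f(e_{i+j})$, equal to $f_{i+j}$ when $i+j\le n$ and $0$ otherwise. Grouping by antidiagonal gives $\delta f=\sum_{m=2}^{n} f_m\nabla_{m-1}$, whence $\B{}(\mu_0^n,\FF)=\langle\nabla_j\mid j\in[n-1]\rangle$; here $f_1$ never appears because the smallest product index is $e_1e_1=e_2$, and the ceiling $m\le n$ is exactly what drops $\nabla_n$. The cohomology statement then follows at once by passing to the quotient, since $\nabla_n$ lives on the antidiagonal $i+j=n+1$ and is therefore not in the span of $\nabla_1,\dots,\nabla_{n-1}$, giving $\HH{AS}(\mu_0^n,\FF)=\langle[\nabla_n]\rangle$.

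I expect the only genuine obstacle to be the bookkeeping in the second step: one must verify that every triple $(a,b,c)$ used to force $c_{i,j}=0$ for $i+j\ge n+2$ actually lies in $[n]^3$, so that these vanishing conclusions are truly forced rather than vacuous, and one must treat the top antidiagonal $m=n+1$ separately since the chaining relation $c_{i,j}=c_{i-1,j+1}$ is only available away from the $(1,n)$ corner. Everything else is routine.
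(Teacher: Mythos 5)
Your proof is correct and uses exactly the coefficient-chasing method the paper employs for its other varieties: write $\theta=\sum c_{i,j}\Delta_{i,j}$, show the entries are constant along antidiagonals with $c_{i,j}=c_{1,i+j-1}$ and vanish when $i+j\ge n+2$, then compute $\delta f$ antidiagonal by antidiagonal. The paper does not actually reprove this proposition---it cites \cite{kkl18} and merely remarks that its methods extend the result from $\mathbb{C}$ to arbitrary fields---so your argument supplies precisely the details the paper leaves to the reader, including the boundary bookkeeping you correctly identify as the only delicate point.
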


\begin{remark}
The above result appears in~\cite{kkl18} for $\FF=\mathbb{C}$, the field of complex numbers. Nevertheless, it is immediate to check, e.g.\ using the methods appearing shortly in this paper, that it does hold over an arbitrary field. 
\end{remark}

As $\dim \HH{AS}(\mu_0^n,\FF)=1$, the next result follows easily.

\begin{thm} Every non-split  $1$-dimensional associative central extension of $\mu_0^n$ is isomorphic to $\mu_0^{n+1}.$
\end{thm}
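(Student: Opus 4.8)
The plan is to apply the three-step Skjelbred--Sund procedure from Section~\ref{S:general} to the algebra $\A=\mu_0^n$ with $s=1$, using the cohomological data already computed. The first step (determining $\HH{AS}(\mu_0^n,\FF)$, $\ann(\mu_0^n)$ and $\Aut{\mu_0^n}$) is partly done: the preceding proposition gives $\HH{AS}(\mu_0^n,\FF)=\langle[\nabla_n]\rangle$, which is one-dimensional. Since we are classifying $1$-dimensional non-split central extensions, we take $s=1$ and $\V=\FF$. The annihilator of $\mu_0^n$ is $\ann(\mu_0^n)=\langle e_n\rangle$, as $e_n e_j = e_{n+j}=0=e_j e_n$ for all $j\in[n]$ while $e_i$ for $i<n$ multiplies nontrivially into $e_{i+1}\neq 0$.

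Next I would analyse $T_1(\mu_0^n)$. Because $\dim\HH{AS}(\mu_0^n,\FF)=1$, the Grassmannian $G_1(\HH{AS}(\mu_0^n,\FF))$ consists of a single point, namely the line $\langle[\nabla_n]\rangle$. To confirm this line lies in $T_1(\mu_0^n)$, I must check the transversality condition $\ann(\nabla_n)\cap\ann(\mu_0^n)=0$. Here $\nabla_n=\sum_{k=1}^n\Delta_{k,n+1-k}$, so $\nabla_n(e_i,e_j)=1$ precisely when $i+j=n+1$; in particular $\nabla_n(e_1,e_n)=1$, which shows $e_n\notin\ann(\nabla_n)$. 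Since $\ann(\mu_0^n)=\langle e_n\rangle$, the intersection with $\ann(\nabla_n)$ is indeed zero, so $\langle[\nabla_n]\rangle\in T_1(\mu_0^n)$ and $T_1(\mu_0^n)$ is this single point.

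With $T_1(\mu_0^n)$ a single point, the $\Aut(\mu_0^n)$-action on it is trivial and there is exactly one orbit. By the isomorphism criterion (the final lemma of Section~\ref{S:general}), every non-split $1$-dimensional associative central extension of $\mu_0^n$ is therefore isomorphic to the one extension built from the representative cocycle $\theta=\nabla_n$. Step three is to identify this extension explicitly: in $\A_\theta=\mu_0^n\oplus\FF e_{n+1}$ the product is $[e_i,e_j]_\theta=e_ie_j+\nabla_n(e_i,e_j)e_{n+1}$, which equals $e_{i+j}$ for $i+j\leq n$ and equals $e_{n+1}$ exactly when $i+j=n+1$; all other products vanish. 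This is precisely the multiplication table $e_ie_j=e_{i+j}$ on a basis of size $n+1$, i.e.\ the algebra $\mu_0^{n+1}$.

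The computation is essentially mechanical once the framework is in place; the only genuine content is verifying the transversality condition $\ann(\nabla_n)\cap\ann(\mu_0^n)=0$, which guarantees that the unique point of the Grassmannian actually belongs to $T_1(\mu_0^n)$ and hence that a non-split extension exists at all. I expect no serious obstacle: the one-dimensionality of $\HH{AS}(\mu_0^n,\FF)$ collapses the orbit analysis entirely, so the main risk is purely bookkeeping, namely correctly reading off that the cocycle $\nabla_n$ adds the single new product $e_1e_n=\cdots=e_ne_1=e_{n+1}$ and recognising the result as $\mu_0^{n+1}$.
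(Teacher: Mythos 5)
Your argument is correct and is exactly the reasoning the paper leaves implicit: the paper simply remarks that the theorem ``follows easily'' from $\dim \HH{AS}(\mu_0^n,\FF)=1$, and your write-up spells out the intended Skjelbred--Sund steps (single point of the Grassmannian, transversality via $\nabla_n(e_1,e_n)=1$, identification of $(\mu_0^n)_{\nabla_n}$ with $\mu_0^{n+1}$). No discrepancies.
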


\smallskip

In the following sections we will study non-associative central extensions of $\mu_0^n$. It is easy to see that $\mu_0^{n+1}$ is an associative  central  extension of $\mu_0^n.$ All extensions of this type will be called trivial. The basis $\{e_1,\dots,e_n\}$ of $\mu_0^n$ will always be assumed to satisfy the relations from Theorem~\ref{T:nfaa}.

\section{Alternative and left alternative central extensions}

Throughout this section, we assume that the characteristic of the filed $\FF$ satisfies $\chara(\FF)\neq 2$. Recall that an algebra ${\rm A}$ is said to be left alternative (respectively, right alternative) if it satisfies the identity 
\[ x(xy)=(xx)y \quad (\mbox{respectively, }(xy)y=x(yy)),\] 
for all $x,y\in{\rm A}$. Also, if ${\rm A}$ is both left alternative and right alternative, it is called alternative. 
%The variety of left alternative algebras is denoted by $\rm LA$.

%It is straightforward that every associative algebra is alternative; in particular, the null-filiform associative algebra $\mu_0^n$ is alternative. 

Let us consider $\mu_0^n$ as a left alternative algebra. 
Note also that the linearization of the left alternative identity for $\mu_0^n$ leads to 
\[e_i(e_je_k)+e_j(e_ie_k)=(e_ie_j)e_k+(e_je_i)e_k %\mbox{  and }(e_ie_j)e_k+(e_ie_k)e_j=e_i(e_je_k)+e_i(e_ke_j)
,\] 
for $i,j,k\in[n]$.
Then, its space of cocycles is formed by all the bilinear maps $\theta\colon\mu_0^n\times \mu_0^n\to \FF$ satisfying
$\theta(e_i,e_je_k)+\theta(e_j,e_ie_k)=\theta(e_ie_j,e_k)+\theta(e_je_i,e_k).$ 
%Since the product of $\mu_o^n$ is given by $e_ie_j=e_{i+j}$ when $i+j\leq n$, 
This can be expressed as
\begin{equation}\label{lalt}
    \theta(e_i,e_{j+k})+\theta(e_j,e_{i+k})=2\theta(e_{i+j},e_k),
\end{equation}
for $i,j,k\in [ n],$ and considering that $e_m=0$ for $m>n$.

%Analagously, the cocycles of the right-commutative algebra $\mu_0^n$ are the bilinear maps $\theta\colon\mu_0^n\times \mu_0^n\to \FF$ satisfying
%\begin{equation}\label{ralt}
%    \theta(e_{i+j},e_{k})+\theta(e_{i+k},e_{j})=2\theta(e_{i},e_{j+k}),
%\end{equation}
%for $i,j,k\in\{1,\dots,n\}$.

\begin{thm}
Assume that $\chara(\FF)\neq 2$. Then all left alternative and all alternative central extensions of $\mu_0^n$ are trivial.
\end{thm}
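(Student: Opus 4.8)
The plan is to run the Skjelbred--Sund procedure of Section~\ref{S:general}: determine the space of left alternative cocycles $\Z{AL}(\mu_0^n,\FF)$, pass to the cohomology $\HH{AL}(\mu_0^n,\FF)$, and read off the extensions. What I expect to establish is that the left alternative cocycles coincide with the associative ones, so that $\HH{AL}(\mu_0^n,\FF)=\langle[\nabla_n]\rangle$ is one-dimensional, exactly as in the associative case; triviality of all extensions then follows formally.

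First I would write an arbitrary bilinear form as $\theta=\sum_{p,q}c_{p,q}\Delta_{p,q}$ and inspect the defining relation~\eqref{lalt}. The crucial observation is that \eqref{lalt} only couples components $c_{p,q}$ with the same index sum $s=p+q$, since each of the terms $\theta(e_i,e_{j+k})$, $\theta(e_j,e_{i+k})$, $\theta(e_{i+j},e_k)$ has total index $i+j+k=s$. Hence the cocycle space splits as a direct sum over the levels $s$, and on each level I set $a_p:=c_{p,\,s-p}$, with the convention $a_p=0$ whenever $p>n$ or $s-p>n$ (which records $e_m=0$ for $m>n$). In these variables \eqref{lalt} becomes the single functional equation
\[
a_i+a_j=2\,a_{i+j},\qquad i,j\geq 1,\ i+j\leq s-1,
\]
which is where the hypothesis $\chara(\FF)\neq 2$ is used.

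Next I would solve this equation level by level. For $2\leq s\leq n+1$ there are no forced boundary zeros among the relevant components, and an easy induction (vacuous when $s=2$, and otherwise begun from $i=j=1$) gives $a_1=a_2=\cdots$; thus the level-$s$ solution space is one-dimensional, spanned by $\nabla_{s-1}$. For $s\geq n+2$ the component $a_1=c_{1,s-1}$ is a forced zero (as $s-1>n$), and substituting $a_1=0$ into $a_1+a_p=2a_{p+1}$ produces the recursion $a_{p+1}=a_p/2$; together with $a_1+a_1=2a_2$ this propagates $0$ through every genuine component, so such levels contribute nothing. Consequently $\Z{AL}(\mu_0^n,\FF)=\langle\nabla_1,\dots,\nabla_n\rangle=\Z{AS}(\mu_0^n,\FF)$. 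As the coboundaries are the same subspace $\langle\nabla_1,\dots,\nabla_{n-1}\rangle$ in every variety, I conclude $\HH{AL}(\mu_0^n,\FF)=\langle[\nabla_n]\rangle$ is one-dimensional.

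Finally, since $\dim\HH{AL}(\mu_0^n,\FF)=1$, the Grassmannian $G_s(\HH{AL}(\mu_0^n,\FF))$ is empty for $s\geq 2$, so $T_s(\mu_0^n)=\emptyset$ there; for $s=1$ the only line is $\langle[\nabla_n]\rangle$, yielding the single extension $\mu_0^{n+1}$, which is precisely the trivial one. No description of $\Aut(\mu_0^n)$ is required, because the unique line is its own orbit. This settles the left alternative statement, and the alternative case follows by squeezing: alternativity implies left alternativity and is implied by associativity, so $\Z{AS}(\mu_0^n,\FF)\subseteq\Z{Alt}(\mu_0^n,\FF)\subseteq\Z{AL}(\mu_0^n,\FF)$, and since the two ends coincide the alternative cocycles agree with the associative ones as well. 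The only delicate point is the boundary bookkeeping forced by $e_m=0$ for $m>n$ --- in particular checking that the forced zero $a_1=0$ genuinely propagates to annihilate each level $s\geq n+2$ --- and making sure $\chara(\FF)\neq 2$ is invoked legitimately, both in reducing \eqref{lalt} to the functional equation and in the propagation step.
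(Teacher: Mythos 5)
Your proposal is correct and follows essentially the same route as the paper: both reduce the cocycle condition to the linear relation $c_{i,j+k}+c_{j,i+k}=2c_{i+j,k}$, specialize one index to $1$ (your recursion $a_{p+1}=\tfrac{1}{2}(a_p+a_1)$ is exactly the paper's identity $2c_{m,s-m}=c_{m-1,s-m+1}+c_{1,s-1}$) to show every left alternative cocycle lies in $\langle\nabla_1,\dots,\nabla_n\rangle$, and then squeeze using $\Z{AS}(\mu_0^n,\FF)\subseteq\Z{LA}(\mu_0^n,\FF)$ to get $\HH{LA}(\mu_0^n,\FF)=\HH{AS}(\mu_0^n,\FF)=\langle[\nabla_n]\rangle$. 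The only point to tighten is the one you flag yourself: your functional equation is a valid instance of \eqref{lalt} only when $i,j\le n$ and $k=s-i-j\le n$, so for levels $s>n+2$ the relation $a_1+a_1=2a_2$ is not available and the zero propagation must instead be seeded at the first legitimate instance $a_1+a_{s-1-n}=2a_{s-n}$, whose left-hand terms are both forced zeros --- after which the conclusion goes through unchanged.
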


\begin{proof}
The first step is to compute $\Z{LA}(\mu_0^n, \FF)$. 
Let $\theta=\sum_{i,j}c_{i,j}\Delta_{i,j}$ be an arbitrary cocycle of $\mu_0^n$ considered as a left alternative algebra. The identity~\eqref{lalt} leads to
\begin{equation*}
    c_{i,j+k}+c_{j,i+k}=2c_{i+j,k}
\end{equation*}
for $i,j,k\in  [n]$, with the assumption that $c_{i,j}=0$ if $i>n$ or $j>n$. Given integers $m, s$ such that $m, s-m\in[n]$ and $m\geq 2$, and taking $i=m-1$, $j=1$ and $k=s-m$ in the above equation, we get 
\begin{equation}\label{lartc}
    2c_{m,s-m}= c_{m-1,s-(m-1)}+c_{1,s-1}.
\end{equation}

%\noindent
\textit{Claim:} For all $i, j\in [n]$, $c_{i,j}=c_{1, i+j-1}$. In particular, $c_{i, j}=0$ if $i+j\geq n+2$.

\medskip

The claim will follow by induction on $i$, the case $i=1$ being trivial. So assume that $i\geq 2$ and that the claim holds for $i-1$. Taking $m=i$ and $s=i+j$ in~\eqref{lartc}, we get
\begin{equation*}
2c_{i,j}= c_{i-1,j+1}+c_{1,i+j-1}. 
\end{equation*}
If $j=n$, then $j+1, i+j-1\geq n+1$ so $c_{i-1,j+1}=0=c_{1,i+j-1}$ and it follows from the identity above that $c_{i,j}=0$, as $\chara(\FF)\neq 2$. In particular, $c_{i,j}=c_{1, i+j-1}$ holds. Otherwise, if $j<n$, then we can use the induction hypothesis to obtain $2c_{i,j}= c_{i-1,j+1}+c_{1,i+j-1}=2c_{1,i+j-1}$, whence the claim.

%%
%Let us suppose $n\geq 2$. Then, taking $i=j=k=1$, equation~\eqref{lartc} says that $c_{2,1}=c_{1,2}$. Let us suppose now that $n\geq 3$, and assume that $c_{m-1,s-m+1}=c_{1,s-1}$ for some $3\leq s\leq n+1$ and $3\leq m\leq s-1$. Then, taking $i=m-1$, $j=1$ and $k=s-m$ in equation~\eqref{lartc}, it follows that 
%\[2c_{m,s-m}= c_{m-1,s-m+1}+c_{1,s-1}=2c_{1,s-1}.\]
%Therefore, it holds that $c_{i,j}=c_{1,i+j-1}$ for $i+j-1 \in [n].$ 
So, it is clear that $\Z{LA}(\mu_0^n, \FF)$ is in the linear space spanned by $\{\nabla_j \}_{j=1}^{n}$; it is also immediate to see that every element from $\Z{AS}(\mu_0^n, \FF)$ is a left alternative cocycle.
Thus,
$\HH{LA}(\mu_0^n, \FF)=\HH{AS}(\mu_0^n, \FF)$, and 
every left alternative central extension is trivial.
Since any alternative extension is left alternative, it follows that every alternative central extension is trivial as well.

%Note that if we consider $\mu_0^n$ as an alternative algebra, we obtain the same necessary condition for the cocycles.

%Now, it suffices to prove that the $\nabla_i$ are cocycles, for $i\in\{2,\dots,n+1\}$. In~\cite{kkl18}, it is proved that $\nabla_j\in\Z{assoc}(\mu_0^n, \FF)$, i.e. they verify $\nabla_j(xy,z)=\nabla_j(x,yz)$ for all $x,y,z\in \mu_0^n$. These conditions are indeed stricter than condition~\eqref{lalt}, so it is clear that the condition~\eqref{lalt} is also verified by $\nabla_j$, and $\nabla_j\in\Z{LA}(\mu_0^n, \FF)$.

%The same reasoning applies to condition~\eqref{ralt}, so also $\nabla_j\in\Z{alt}(\mu_0^n, \FF)$. Then, $\{\nabla_i\}_{i=2}^{n+1}$ is a basis both for $\Z{LA}(\mu_0^n, \FF)$ and for $\Z{AL}(\mu_0^n, \FF)$.

%From direct calculations, it follows that $\HH{LA}(\mu_0^n, \FF)=\HH{AL}(\mu_0^n, \FF)=[\nabla_{n+1}]$, and in~\cite{kkl18} it can be seen that also $\HH{A}(\mu_0^n, \FF)=[\nabla_{n+1}]$. Then, the central extensions of $\mu_0^n$ considered as left-alternative and alternative algebra are exactly the same as in the associative case, i.e the trivial extension $\mu_0^{n+1}$ (see~\cite{kkl18}).
\end{proof}

%\newpage
\section{Jordan central extensions}

Throughout this section, we assume that the characteristic of the filed $\FF$ satisfies $\chara(\FF)\neq 2, 3$.
Recall that a commutative algebra $\rm A$ is said to be Jordan if it satisfies \[x^2(yx)=(x^2y)x,\]
for all $x,y\in{\rm A}$. 
%The variety of Jordan algebras is denoted by $\rm J$. 
It is immediate to check that the commutative algebra $\mu_0^n$ satisfies the previous identity and therefore is a Jordan algebra. 

The space of Jordan cocycles of $\mu_0^n$ is formed by all the bilinear maps $\theta\colon\mu_0^n\times \mu_0^n\to \FF$ satisfying
\begin{equation}\theta(x,y)=\theta(y,x)\label{cond1},\end{equation}
\begin{equation}\theta(x^2, yx)=\theta(x^2y, x)\label{cond2}.\end{equation}
Equivalently, since $\chara(\FF)\neq 2, 3$, we have
\[\theta(e_i,e_j)=\theta(e_j,e_i), \]
\[\theta(e_{i+\ell},e_{j+k})+\theta(e_{j+\ell},e_{i+k})+\theta(e_{k+\ell},e_{i+j})=\theta(e_{i},e_{j+k+\ell})+\theta(e_{j},e_{i+k+\ell})+\theta(e_{k},e_{i+j+\ell}),\]
for $i,j,k,\ell\in [n].$ In particular, taking 
%$i=j=k$, we see that every cocycle must verify
%\begin{equation}\label{relA}
%\theta(e_i,e_{2i+\ell})=\theta(e_{2i},e_{i+\ell}),
%\end{equation}
%and taking 
$i=j$, we see that every cocycle must verify
\begin{equation}\label{relB}
2\theta(e_{i+\ell},e_{i+k})+\theta(e_{\ell+k},e_{2i})
= 2\theta(e_{i},e_{i+k+\ell})+\theta(e_{k},e_{2i+\ell}).
\end{equation}

\begin{thm}
All Jordan central extensions of $\mu_0^n$ are trivial.
\end{thm}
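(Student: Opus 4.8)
The plan is to mimic the strategy used for the left alternative case: compute $\Z{J}(\mu_0^n,\FF)$ explicitly, show that it coincides (up to coboundaries) with $\Z{AS}(\mu_0^n,\FF)$, conclude that $\HH{J}(\mu_0^n,\FF)=\HH{AS}(\mu_0^n,\FF)=\langle[\nabla_n]\rangle$ is one-dimensional, and hence that every Jordan central extension is the trivial one $\mu_0^{n+1}$. Write an arbitrary Jordan cocycle as $\theta=\sum_{i,j}c_{i,j}\Delta_{i,j}$. The symmetry condition~\eqref{cond1} gives $c_{i,j}=c_{j,i}$, and the polarized Jordan identity, evaluated on basis elements, produces a family of linear relations among the $c_{i,j}$ (with the convention $c_{i,j}=0$ whenever $i>n$ or $j>n$). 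The goal is to prove a claim analogous to the left alternative one, namely that $c_{i,j}$ depends only on $i+j$, so that $\theta$ lies in the span of $\{\nabla_j\}_{j=1}^n$; symmetry of $\nabla_j$ is automatic, so this would immediately force $\Z{J}(\mu_0^n,\FF)\subseteq\langle\nabla_j\mid j\in[n]\rangle$.

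First I would extract a usable recursion. The natural specialization is to feed convenient values of $i,j,k,\ell$ into the polarized identity (or into its already-specialized form~\eqref{relB}) to isolate $c_{i,j}$ in terms of coefficients with a smaller first index or a ``more extreme'' index pair, exactly as equation~\eqref{lartc} did in the left alternative argument. A promising choice is to set one of the free indices to its maximal useful value so that several terms vanish by the $c_{i,j}=0$ convention, thereby pinning down $c_{i,j}$ when $i+j$ is large, and then to run an induction (on $i$, or on the ``distance'' of $(i,j)$ from the symmetric diagonal) that propagates the relation $c_{i,j}=c_{1,i+j-1}$ downward. Because the Jordan identity is cubic rather than quadratic, I expect to need both~\eqref{relB} and the off-diagonal (unequal-index) instances of the polarized identity; the symmetry $c_{i,j}=c_{j,i}$ should be used repeatedly to halve the bookkeeping and to resolve the terms $\theta(e_\cdot,e_\cdot)$ that appear with swapped arguments.

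The main obstacle will be that the Jordan cocycle identity is genuinely more complicated than the left alternative one: it is a relation among \emph{six} theta-values rather than four, and it is symmetric in $i,j,k$, so a single substitution will not cleanly isolate one coefficient. I anticipate needing a two-step or nested induction, and I will have to be careful about low-dimensional or boundary cases where the index-shifts push arguments past $n$ and collapse terms. The hypotheses $\chara(\FF)\neq 2,3$ are what let me divide by the integer coefficients ($2$ in~\eqref{relB}, and the $3$'s arising when all three of $i,j,k$ coincide in the fully symmetric identity), so at each division I must check that the relevant scalar is a unit; this is exactly where the characteristic assumption is consumed. Once the claim $c_{i,j}=c_{1,i+j-1}$ is established, the remainder is immediate: $\Z{J}(\mu_0^n,\FF)=\langle\nabla_j\mid j\in[n]\rangle=\Z{AS}(\mu_0^n,\FF)$, so $\HH{J}(\mu_0^n,\FF)=\langle[\nabla_n]\rangle$ is one-dimensional, and every non-split Jordan central extension is therefore isomorphic to $\mu_0^{n+1}$, i.e.\ trivial.
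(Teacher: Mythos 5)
Your overall strategy coincides with the paper's: prove that every Jordan cocycle satisfies $c_{i,j}=c_{1,i+j-1}$, deduce $\Z{J}(\mu_0^n,\FF)=\langle\nabla_j\mid j\in[n]\rangle$, hence $\HH{J}(\mu_0^n,\FF)=\HH{AS}(\mu_0^n,\FF)=\langle\lb{\nabla_n}\rangle$, and conclude triviality. The genuine gap is that the one hard step --- extracting from the six-term linearized Jordan identity a recursion that actually closes under induction --- is left entirely as a declaration of intent (``I would extract a usable recursion'', ``I anticipate needing a two-step or nested induction''). That is where all the content of the proof lives. Concretely, the argument needs three ingredients you do not supply: (i) the specialization $i=j$ of the linearized identity, i.e.\ \eqref{relB}, rearranged to $c_{k,2a+b}=2c_{a+b,a+k}+c_{b+k,2a}-2c_{a,a+b+k}$; (ii) the decomposition of the first index as $i=2a+b$ with $b\in\pb{1,2}$ and $1\le a<a+b<i$, which is exactly what makes every term on the right-hand side reachable by an induction on the first index (together with a boundary-case analysis when $a+b+j\geq n+1$); and (iii) a separate base case $i=2$, which does \emph{not} come from the linearized relation but from the unpolarized identity \eqref{cond2} with $x=e_1$, $y=e_{j-1}$, giving $c_{2,j}=\theta(e_1^2,e_{j-1}e_1)=\theta(e_1^2e_{j-1},e_1)=c_{1,j+1}$. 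Your alternative suggestion --- pin down $c_{i,j}$ for $i+j$ large and ``propagate downward'' --- is not how the induction runs (it goes upward on $i$), and without (ii) there is no reason a single substitution in the fully symmetric six-term identity isolates one unknown coefficient.

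Two smaller inaccuracies. The hypotheses $\chara(\FF)\neq 2,3$ are consumed in passing from \eqref{cond2} to its full linearization, not in dividing inside the recursion (the relevant relation has coefficient $1$ on the unknown), so your account of where the characteristic enters is misplaced, though harmless. Also, $\Z{J}(\mu_0^n,\FF)=\langle\nabla_j\mid j\in[n]\rangle$ needs the reverse inclusion: each $\nabla_j$ must be verified to be a Jordan cocycle, which holds because the $\nabla_j$ are symmetric \emph{associative} cocycles and those two conditions are stricter than \eqref{cond1}--\eqref{cond2}; symmetry alone, which is all you invoke, does not suffice.
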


\begin{proof}
We will prove that $\{\nabla_j\}_{j=1}^{n}$ is a basis for $\Z{J}(\mu_0^n, \FF)$.

Let $\theta=\sum_{i,j}c_{i,j}\Delta_{i,j}$ be an arbitrary cocycle in $\Z{J}(\mu_0^n, \FF)$. Note that $c_{i,j}=c_{j,i}$, for all $i,j\in[n]$. By \eqref{relB}, we also have
\begin{equation}\label{relcij}
c_{k,2a+b}=2c_{a+b,a+k}+c_{b+k,2a}-2c_{a,a+b+k},
\end{equation}
for all $a, b, k\in [n]$, with the assumption that $c_{i,j}=0$ in case $i>n$ or $j>n$.

\medskip

\textit{Claim:} For all $i, j\in [n]$, $c_{i,j}=c_{1, i+j-1}$. In particular, $c_{i, j}=0$ if $i+j\geq n+2$.

\medskip

The proof is by induction on $i$. If either $i=1$ or $j=1$, the claim is trivial, by commutativity. So we can assume that $i, j\geq 2$. If $i=2$, then
\begin{equation*}
c_{2, j}=\theta(e_1^2, e_{j-1}e_1)=\theta(e_{j+1}, e_1)=\theta(e_1, e_{j+1})=c_{1, j+1}. 
\end{equation*}
Let us assume thus that $i\geq 3$ and $j\geq 2$. There are unique integers $a, b$ with $b\in\pb{1, 2}$ such that $i=2a+b$. Notice that $1\leq a<a+b<i\leq n$. Then, using \eqref{relcij} with $k=j$, we get
\begin{equation*}
c_{i,j}= c_{2a+b,j}=2c_{a+b,a+j}+c_{b+j,2a}-2c_{a,a+b+j}.
\end{equation*}
There are two cases to consider.

\textbf{Case1:} $a+b+j\leq n$. In this case, we can use the inductive hypothesis and the commutativity to get
\begin{align*}
2c_{a+b,a+j}+c_{b+j,2a}-2c_{a,a+b+j} &= 2c_{1,2a+b+j-1}+c_{1,2a+b+j-1}-2c_{1,2a+b+j-1}\\ &=c_{1,2a+b+j-1}=c_{1, i+j-1}.
\end{align*}

\textbf{Case2:} $a+b+j\geq n+1$. Then $c_{a,a+b+j}=0$ and we have $c_{i,j}=2c_{a+b,a+j}+c_{2a,b+j}$. Either $a+j\geq n+1$ and hence $c_{a+b,a+j}=0$, or $a+j\leq n$ and the inductive hypothesis says that $c_{a+b,a+j}=c_{1,2a+b+j-1}=0$, because $2a+b+j-1\geq a+b+j\geq n+1$. In any case, $c_{a+b,a+j}=0$. Similarly, $c_{2a,b+j}=0$ and we conclude that $c_{i,j}=0=c_{1, i+j-1}$, because $i+j-1\geq a+b+j\geq n+1$.

The claim is thus established. It remains to observe that the $\{\nabla_j\}_{j=1}^{n}$ are indeed Jordan cocycles. In~\cite{kkl18}, it is proved that $\nabla_j\in\Z{AS}(\mu_0^n, \FF)$, i.e.\ they verify $\nabla_j(xy,z)=\nabla_j(x,yz)$. Clearly, also $\nabla_j(x,y)=\nabla_j(y,x)$. These conditions are indeed stricter than conditions~\eqref{cond1} and~\eqref{cond2}, so it is clear that $\nabla_j\in\Z{J}(\mu_0^n, \FF)$. This implies that $\HH{J}(\mu_0^n, \FF)=\HH{AS}(\mu_0^n, \FF)$ and, according to~\cite{kkl18}, the only Jordan central extension of $\mu_0^n$ is $\mu_0^{n+1}$.
\end{proof}

%\newpage
\section{Left commutative and bicommutative central extensions}\label{S:lbc}

Recall that an algebra ${\rm A}$ is said to be left (respectively, right) commutative if it satisfies
\[x(yz)=y(xz) \quad \mbox{(respectively, $(xy)z=(xz)y$}),\] 
for all $x,y,z\in{\rm A}$. Equivalently, ${\rm A}$ is left (respectively, right) commutative if and only if the left (respectively, right) multiplication operators commute. In case ${\rm A}$ is both left and right commutative, we say that ${\rm A}$ is bicommutative. 
%The varieties of left commutative, right commutative and bicommutative  algebras are denoted by $\rm LC, RC$ and $\rm BC$, respectively.

The main results in this section will require the field $\FF$ to be algebraically closed and of sufficiently large characteristic (which for simplicity we will assume to be $0$, when necessary), while others hold for arbitrary fields. Unless otherwise is stated, it should be assumed that the field $\FF$ is arbitrary.

\begin{prop}\label{P:hh:lbc}
Let $n\geq 2$ and recall the bilinear forms $\nabla_ j = \sum\limits_{k=1}^{j}\Delta_{k,j+1-k}$, defined for $j\in[n]$.
Then the following hold:
\begin{enumerate}[label=\textup{(\alph*)}]
\item $\dim\Z{LC}(\mu_0^n, \FF)=2n-1$ and $\pb{\Delta_{i,1}\mid 2\leq i\leq n}\cup\pb{\nabla_j\mid j\in [n]}$ is a basis of $\Z{LC}(\mu_0^n, \FF)$.\label{P:hh:lbc:a}
\item $\dim\HH{LC}(\mu_0^n, \FF)=n$ and the classes $\pb{\lb{\Delta_{i,1}}\mid 2\leq i\leq n}\cup\pb{\lb{\nabla_n}}$ form a basis of $\HH{LC}(\mu_0^n, \FF)$.\label{P:hh:lbc:b}
\item In the bicommutative case we have that $\dim\HH{BC}(\mu_0^n, \FF)=2$, with basis $\pb{\lb{\Delta_{2, 1}}, \lb{\nabla_n}}$.\label{P:hh:lbc:c}
\end{enumerate}
\end{prop}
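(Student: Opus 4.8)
The plan is to follow the same overall strategy that succeeded in the left alternative and Jordan cases, but now accounting for the fact that the cohomology is no longer one-dimensional. First I would write out explicitly the cocycle condition defining $\Z{LC}(\mu_0^n, \FF)$. Starting from the left commutative identity $x(yz)=y(xz)$, the associated condition on a bilinear map $\theta$ is $\theta(e_i, e_j e_k)=\theta(e_j, e_i e_k)$, which, using $e_je_k=e_{j+k}$ and the convention $e_m=0$ for $m>n$, becomes
\begin{equation*}
\theta(e_i, e_{j+k})=\theta(e_j, e_{i+k})
\end{equation*}
for all $i,j,k\in[n]$. Writing $\theta=\sum_{i,j}c_{i,j}\Delta_{i,j}$, this reads $c_{i,j+k}=c_{j,i+k}$ whenever the indices are in range. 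The key observation to extract from this relation is that $c_{i,j}$ depends only on the second index $j$ and on the sum $i+j$ (when the second index exceeds $1$); more precisely, setting $k=1$ one finds $c_{i, j+1}=c_{j, i+1}$, so the value $c_{i,j}$ with $j\geq 2$ is symmetric in a way that forces it to depend only on $i+j$. I would prove by a short induction that $c_{i,j}=c_{1, i+j-1}$ for all $i\geq 1$ and $j\geq 2$, so that the only genuinely free parameters are the $c_{i,1}$ for $i\geq 1$ together with the diagonal sums $c_{1, s}$ governing the $\nabla$'s. This yields that $\pb{\Delta_{i,1}\mid 2\leq i\leq n}\cup\pb{\nabla_j\mid j\in[n]}$ spans $\Z{LC}(\mu_0^n, \FF)$, and a dimension count (these $2n-1$ forms being linearly independent, which is clear from inspecting which $\Delta_{i,j}$ each involves) gives part \ref{P:hh:lbc:a}.

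For part \ref{P:hh:lbc:b}, I would recall that $\B{}(\mu_0^n, \FF)=\langle \nabla_j\mid j\in[n-1]\rangle$, exactly as in the associative case, since the coboundary map $\delta f(x,y)=f(xy)$ does not depend on the variety. Indeed $\delta(e_s^*)=\nabla_{s-1}$ for $s\geq 2$ and $\delta(e_1^*)=0$. Thus modulo coboundaries the classes $\lb{\nabla_j}$ all vanish for $j\leq n-1$, leaving $\lb{\nabla_n}$ together with the classes $\lb{\Delta_{i,1}}$ for $2\leq i\leq n$; these $n$ classes are linearly independent in the quotient because the coboundary space involves no $\Delta_{i,1}$ with $i\geq 2$ except through the symmetric $\nabla$'s, giving $\dim\HH{LC}(\mu_0^n, \FF)=n$.

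For the bicommutative case \ref{P:hh:lbc:c}, the plan is to impose in addition the right commutative condition $(xy)z=(xz)y$, which for $\theta$ becomes $\theta(e_ie_j, e_k)=\theta(e_ie_k, e_j)$, i.e. $c_{i+j, k}=c_{i+k, j}$. This is the mirror image of the left commutative relation and forces $c_{i,j}$ with $i\geq 2$ to depend only on $i+j$ as well, so combined with the left commutative constraint only $c_{1,j}$-type data and the single extra parameter $c_{2,1}$ survive. I would intersect the two spans, conclude that $\Z{BC}(\mu_0^n, \FF)$ is spanned by $\pb{\Delta_{2,1}}\cup\pb{\nabla_j\mid j\in[n]}$, and then reduce modulo the coboundaries to obtain the basis $\pb{\lb{\Delta_{2,1}}, \lb{\nabla_n}}$ and hence $\dim\HH{BC}(\mu_0^n, \FF)=2$. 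The main obstacle I anticipate is bookkeeping the boundary cases where indices exceed $n$: the induction establishing $c_{i,j}=c_{1,i+j-1}$ must handle separately the ranges where $i+j-1>n$ (forcing the coefficient to be zero) versus $i+j-1\leq n$, and one must verify that the spanning forms are genuinely independent rather than accidentally coinciding for small $n$, which is why the hypothesis $n\geq 2$ is needed.
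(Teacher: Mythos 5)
Your overall strategy matches the paper's: derive $c_{i,j}=c_{1,i+j-1}$ for $j\geq 2$ from the cocycle relation, read off the spanning set, quotient by $\B{}(\mu_0^n,\FF)=\langle\nabla_j\mid j\in[n-1]\rangle$ for (b), and add the right commutative constraint for (c). For part (c) you work directly with the coefficient identities $c_{i+j,k}=c_{i+k,j}$, where the paper instead passes through the opposite algebra and the flip $\tau$ (using that $\mu_0^n$ is commutative and that $\nabla_j\circ\tau=\nabla_j$); both routes give $c_{s,1}=c_{1,s}$ for $s\geq 3$ with $c_{2,1}$ left free, so that difference is cosmetic. Two points need repair, however.

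First, the substitution you propose for the key claim does not work as stated. From $c_{i,j+k}=c_{j,i+k}$, setting $k=1$ gives $c_{i,j+1}=c_{j,i+1}$, i.e.\ the involution $(a,b)\mapsto(b-1,a+1)$ on pairs of fixed sum; its orbits have at most two elements, so for sums at least $5$ it cannot identify all the $c_{a,b}$ with $b\geq 2$ and $a+b$ fixed (for instance it links $c_{1,4}$ with $c_{3,2}$ but leaves $c_{2,3}$ fixed), and no induction can be bootstrapped from this relation alone. The correct move --- the one the paper makes --- is to set $j=1$: then $c_{i,1+k}=c_{1,i+k}$, which is exactly $c_{i,m}=c_{1,i+m-1}$ for every $m\geq 2$, with no induction needed. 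Second, your argument for (a) only shows $\Z{LC}(\mu_0^n,\FF)\subseteq\langle\pb{\Delta_{i,1}\mid 2\leq i\leq n}\cup\pb{\nabla_j\mid j\in[n]}\rangle$, i.e.\ $\dim\Z{LC}(\mu_0^n,\FF)\leq 2n-1$; to get equality you must also check the reverse inclusion, namely that each $\Delta_{i,1}$ and $\nabla_n$ really is a left commutative cocycle ($\Delta_{i,1}(e_k,e_\ell e_m)=0=\Delta_{i,1}(e_\ell,e_k e_m)$ since $e_\ell e_m$ has index at least $2$, and $\nabla_n(e_k,e_\ell e_m)$ depends only on whether $k+\ell+m=n+1$, hence is symmetric in $k$ and $\ell$; the remaining $\nabla_j$ are coboundaries). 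The paper devotes a paragraph to this verification; without it the dimension count in (a), and hence in (b) and (c), is only an upper bound.
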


\begin{proof}
The space $\Z{LC}(\mu_0^n, \FF)$ consists of the bilinear forms $\theta=\sum_{i, j=1}^n c_{i,j}\Delta_{i,j}$, with $c_{i,j}\in\FF$, satisfying $\theta(e_i, e_j e_k)=\theta(e_j, e_i e_k)$, for all $i, j, k\in[n]$.

\medskip

%\noindent
\textit{Claim:} If $j\geq 2$, then $c_{i,j}=c_{1, i+j-1}$. In particular, if $i+j\geq n+2$ then $c_{i,j}=0$.

\medskip

For $j\geq 2$ we have
\begin{equation}\label{E:P:hh:lbc:theta}
c_{i,j}=\theta(e_i, e_j)=\theta(e_i, e_1 e_{j-1})=\theta(e_1, e_i e_{j-1})=c_{1, i+j-1}. 
\end{equation}
Thus, if $i+j\geq n+2$, then necessarily $j\geq 2$ and \eqref{E:P:hh:lbc:theta} gives $c_{i,j}=0$. This establishes the claim.

\medskip

Hence, $c_{i,2}=c_{i-1, 3}=\cdots=c_{2, i}=c_{1, i+1}$ for all $i\in[n-1]$ and we can write
\begin{equation}\label{E:P:hh:lbc:thetab}
\theta=\sum_{i=1}^n c_{i,1}\Delta_{i,1}+\sum_{j=2}^n c_{1, j}(\nabla_j-\Delta_{j,1})=
\sum_{i=2}^n (c_{i,1}-c_{1, i})\Delta_{i,1}+\sum_{j=1}^n c_{1, j}\nabla_j,
\end{equation}
which shows that $\Z{LC}(\mu_0^n, \FF)\subseteq\langle\pb{\Delta_{i,1}\mid 2\leq i\leq n}\cup\pb{\nabla_j\mid j\in [n]}\rangle$.

It remains to prove the reverse inclusion, i.e.\ that the $\Delta_{i,1}$ and the $\nabla_j$ are left commutative cocycles. Let $k, \ell, m\in[n]$. Then
\begin{equation*}
\Delta_{i, 1}(e_k, e_{\ell} e_m)=0= \Delta_{i, 1}(e_{\ell}, e_k e_m),
\end{equation*}
so indeed $\Delta_{i, 1}\in\Z{LC}(\mu_0^n, \FF)$. As $\langle\nabla_j \mid j\in [n-1]\rangle=\B{}(\mu_0^n, \FF)\subseteq \Z{LC}(\mu_0^n, \FF)$, it remains to show that $\nabla_n$ is a cocycle. This follows immediately from the fact that 
\begin{equation*}\nabla_n(e_k, e_{\ell}e_m)=
\begin{cases}
1&\mbox{if $k+\ell+m=n+1$;}\\
0&\mbox{otherwise.} 
\end{cases} 
\end{equation*}
This concludes the proof of \ref{P:hh:lbc:a}. Part \ref{P:hh:lbc:b} is clear by $\B{}(\mu_0^n, \FF)=\langle\nabla_j \mid j\in [n-1]\rangle$.

We precede the proof of \ref{P:hh:lbc:c} with some general considerations. Let ${\rm A}$ be an algebra and denote by $\nu:{\rm A}\times {\rm A}\longrightarrow {\rm A}$ its multiplication map. The opposite algebra ${\rm A}^{\mathrm{op}}$ is the algebra with the same underlying vector space and with multiplication $\nu\circ \tau$, where $\tau:{\rm A}\times {\rm A}\longrightarrow {\rm A}\times {\rm A}$, $(a, b)\mapsto(b,a)$ is the flip. Clearly, ${\rm A}$ is right commutative if and only if ${\rm A}^{\mathrm{op}}$ is left commutative and $\theta\in\Z{RC}({\rm A}, \FF)$ if and only if $\theta\circ\tau\in\Z{LC}({\rm A}^{\mathrm{op}}, \FF)$. Moreover, $\Z{BC}({\rm A}, \FF)=\Z{LC}({\rm A}, \FF)\cap \Z{RC}({\rm A}, \FF)$.

We return now to the case of the algebra $\mu_0^n$. This algebra is commutative, so it coincides with its opposite algebra. Let $\theta\in\Z{BC}(\mu_0^n, \FF)$. Then $\theta\in\Z{LC}(\mu_0^n,\FF)$ and by \ref{P:hh:lbc:a} we can write $\theta$ as in \eqref{E:P:hh:lbc:thetab}. What is more, for such a $\theta$ we have $\theta\in\Z{BC}(\mu_0^n, \FF)$ if and only if $\theta\circ\tau\in\Z{LC}(\mu_0^n, \FF)$. Seeing that $\Delta_{i,j}\circ\tau=\Delta_{j,i}$, we obtain $\nabla_j\circ\tau=\nabla_{j}$, for all $j\in [n]$.

Thence, in view of the results in \ref{P:hh:lbc:a}, and the observation that $\Delta_{1, 2}=\nabla_2-\Delta_{2, 1}$, we have that $\theta\circ\tau\in\Z{LC}(\mu_0^n, \FF)$ if and only if $c_{i,1}=c_{1, i}$ for all $3\leq i\leq n$.
This proves that $\pb{\nabla_j\mid  j\in [n]}\cup\pb{\Delta_{2, 1}}$ is a basis of $\Z{BC}(\mu_0^n, \FF)$. Finally, given our description of $\B{}(\mu_0^n, \FF)$, we immediately obtain the basis $\pb{\lb{\Delta_{2, 1}}, \lb{\nabla_n}}$ of $\HH{BC}(\mu_0^n, \FF)$.
\end{proof}

\subsection{The automorphism group of $\mu_0^n$}\label{SS:action:auto}

We denote by $\Aut (\mu_0^n)$ the automorphism group of $\mu_0^n$. Let $\phi\in\Aut (\mu_0^n)$. Then we identify $\phi$ with its matrix $\seq{\phi_{i,j}}_{ i, j\in [n]}$ relative to the basis $\pb{e_1,\dots,e_n}$. It is easy to see that the automorphisms of $\mu_0^n$ are precisely those linear endomorphisms $\phi=\seq{\phi_{i,j}}_{ i, j\in [n]}$ with $\phi_{1,1}\neq 0$, $\phi_{2, 1}, \ldots, \phi_{n,1}\in\FF$ arbitrary and $\phi_{i,j}$ with $2\leq j\leq n$ determined by $\phi(e_j)=\phi(e_1)^j$. In other words,
\begin{equation*}\label{E:aut:def}
    \phi_{i,j}=\sum_{k_1+\cdots+k_j=i}\phi_{k_1, 1}\cdots \phi_{k_j, 1},
\end{equation*}
for all $i, j\in[n]$.

It follows that $\phi_{i,i}=\phi_{1,1}^i$ and $\phi_{i,j}=0$ if $j>i$. For $j<i$ we also have
\begin{equation}\label{E:aut:fij}
    \phi_{i,j}=j\phi_{1, 1}^{j-1}\phi_{i-j+1, 1}+p_{i,j}(\phi_{1,1}, \phi_{2, 1}, \ldots, \phi_{i-j, 1}),
\end{equation}
for some polynomial $p_{i,j}$ with coefficients in $\FF$ which depends only on $i$ and $j$.

Write $\phi=\sum_{i, j=1}^n\phi_{i,j}E_{i,j}$, where we think of $E_{i,j}$ both as the matrix unit with all entries equal to $0$ except for the entry $(i, j)$ which equals $1$, and also as the linear endomorphism of $\mu_0^n$ defined by $E_{i,j}(e_k)=\delta_{j,k}e_i$. Then,
\begin{align*}
    \phi\cdot\Delta_{s,t}(e_k, e_\ell)&=\Delta_{s,t}(\phi(e_k), \phi(e_\ell))
    =\sum_{k\leq i}\sum_{\ell\leq j}\phi_{i,k}\phi_{j,\ell}\Delta_{s,t}(e_i, e_j)=\sum_{k\leq s,\ \ell\leq t}\phi_{s,k}\phi_{t,\ell}.
\end{align*}
Thence, the formula for the action is given as
\begin{equation}\label{E:auto:formula}
    \phi\cdot\Delta_{s,t}=\sum_{k\leq s,\ \ell\leq t}\phi_{s,k}\phi_{t,\ell}\Delta_{k, \ell}.
\end{equation}

\subsection{The orbit decomposition of $\HH{LC}(\mu_0^n, \FF)$}\label{SS:action:orbits:lcom}

%We start out by defining three families of orbits of $\Aut \mu_0^n$ on $\HH{lcom}(\mu_0^n, \FF)$ and then we prove that these give the complete decomposition of $\HH{lcom}(\mu_0^n, \FF)$ into disjoint orbits. Given an element $\theta\in\Z{lcom}(\mu_0^n, \FF)$ (respectively, $\lb{\theta}\in\HH{lcom}(\mu_0^n, \FF)$), we denote its orbit by $\orb(\theta)$ (respectively, $\orb(\lb{\theta)}$).

%Given an element $\theta\in\Z{LC}(\mu_0^n, \FF)$ (respectively, $\lb{\theta}\in\HH{LC}(\mu_0^n, \FF)$), we denote its orbit by $\orb(\theta)$ (respectively, $\orb(\lb{\theta)}$). 
To state the main result of this subsection we need an additional definition. For $0\leq i\leq n$, let $R(i, n)$ be the multiplicative subgroup of $\FF^*$ consisting of all $(i+1)$-st powers of all $(n+1)$-st roots of unity in $\FF^*$. In case $\FF$ is algebraically closed of characteristic $0$, there exists some primitive $(n+1)$-st root of unity $\zeta$ and $R(i, n)=\langle\zeta^{i+1}\rangle$ is the cyclic group generated by $\zeta^{i+1}$. Denote the quotient group by
\begin{equation}\label{E:def:fin}
\FF_{(i, n)} = \FF^*/R(i, n),
\end{equation}
and for $\mu\in\FF^*$, let $\overline{\mu}=\mu R(i, n)$ be the corresponding coset.

\begin{thm}\label{T:action:orbits:lcom}
Assume that $\FF$ is algebraically closed of characteristic $0$ and let $n\geq 2$. The following elements of $\Z{LC}(\mu_0^n, \FF)$ give a complete list of distinct representatives of the orbits of the automorphism group $\Aut (\mu_0^n)$ on $\HH{LC}(\mu_0^n, \FF):$
\begin{enumerate}[label=\textup{(\alph*)}]
\item $0;$
\item $\pb{\Delta_{i, 1}\mid 2\leq i\leq n};$
\item $\pb{\nabla_n+\mu\Delta_{n, 1}\mid \mu\in\FF};$
\item $\pb{\nabla_n+\overline{\mu}\Delta_{i, 1}\mid 2\leq i\leq n-1,\ \overline{\mu}\in\FF_{(i, n)}}.$
\end{enumerate}
\end{thm}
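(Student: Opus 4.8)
The plan is to compute the action of $\Aut(\mu_0^n)$ on $\HH{LC}(\mu_0^n, \FF)$ using the explicit basis from Proposition~\ref{P:hh:lbc}\ref{P:hh:lbc:b}, namely $\pb{\lb{\Delta_{i,1}}\mid 2\leq i\leq n}\cup\pb{\lb{\nabla_n}}$, and the action formula \eqref{E:auto:formula}. First I would determine how a general automorphism $\phi$ transforms each basis element modulo coboundaries. Since $\B{}(\mu_0^n, \FF)=\langle\nabla_j\mid j\in[n-1]\rangle$, in cohomology every $\nabla_j$ with $j<n$ vanishes, so the computation of $\phi\cdot\nabla_n$ and $\phi\cdot\Delta_{i,1}$ should simplify considerably once reduced mod lower $\nabla_j$'s. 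Using $\phi_{i,i}=\phi_{1,1}^i$, $\phi_{i,j}=0$ for $j>i$, and the triangularity \eqref{E:aut:fij}, I expect $\phi\cdot\nabla_n$ to be a scalar multiple of $\nabla_n$ (up to coboundaries) — plausibly $\phi_{1,1}^{n+1}\nabla_n$ — and $\phi\cdot\Delta_{i,1}$ to be $\phi_{i,i}\phi_{1,1}\Delta_{i,1}=\phi_{1,1}^{i+1}\Delta_{i,1}$ plus contributions from $\Delta_{k,1}$ with $k<i$ and possibly from $\nabla_n$.

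With these formulas in hand, the strategy is to stratify $\HH{LC}(\mu_0^n, \FF)$ by the coefficient of $\lb{\nabla_n}$. If that coefficient is $0$, a class is $\sum_{i=2}^n a_i\lb{\Delta_{i,1}}$, and I would argue that the ``leading'' index (largest $i$ with $a_i\neq 0$) is an invariant that can be normalized: scaling by $\phi_{1,1}$ and using the lower-triangular correction terms, one can reduce any nonzero such class to a single $\lb{\Delta_{i,1}}$, yielding family (b) together with $0$ in (a). If the coefficient of $\lb{\nabla_n}$ is nonzero, rescale to make it $1$; the class is $\lb{\nabla_n}+\sum_{i=2}^n\mu_i\lb{\Delta_{i,1}}$, and I would use the remaining freedom in $\phi$ (the parameters $\phi_{2,1},\ldots,\phi_{n,1}$ acting via the off-diagonal entries) to kill as many $\mu_i$ as possible, reducing to a single surviving term $\mu\Delta_{i,1}$.

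The main obstacle — and the source of the arithmetic subtlety in the statement — will be computing the residual stabilizer and identifying the exact scaling equivalence on the surviving coefficient $\mu$. After normalizing the $\nabla_n$-coefficient to $1$, the automorphisms that preserve this normalization are constrained by $\phi_{1,1}^{n+1}=1$, i.e.\ $\phi_{1,1}$ must be an $(n+1)$-st root of unity; such a $\phi$ then acts on the surviving $\Delta_{i,1}$ by multiplication by $\phi_{1,1}^{i+1}$. This is exactly why the representatives in (d) are indexed by cosets $\overline{\mu}\in\FF_{(i, n)}=\FF^*/R(i, n)$, where $R(i, n)$ is generated by $(i+1)$-st powers of $(n+1)$-st roots of unity: the orbit of $\mu\Delta_{i,1}$ under the residual group is precisely $\mu R(i, n)$. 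The case $i=n$ is special, since $R(n, n)$ consists of $(n+1)$-st powers of $(n+1)$-st roots of unity, which is trivial, so no identification occurs and one gets the full parameter $\mu\in\FF$ in (c); this is consistent with $\overline{\mu}$ ranging only over $2\leq i\leq n-1$ in (d).

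I would carry this out in the order: (i) derive the action formulas for $\phi\cdot\nabla_n$ and $\phi\cdot\Delta_{i,1}$ modulo $\B{}(\mu_0^n,\FF)$; (ii) handle the $\nabla_n$-free stratum to get (a) and (b); (iii) normalize the $\nabla_n$-coefficient and eliminate all but one $\Delta_{i,1}$ to get the shape in (c) and (d); and (iv) compute the residual stabilizer and read off the group $R(i, n)$ to pin down the coset parametrization, checking separately that the listed representatives are genuinely pairwise non-isomorphic (distinctness of orbits) by confirming these invariants — the leading index and the coset $\overline{\mu}$ — are preserved. The final distinctness check is where I would be most careful, since it requires verifying that no automorphism mixes the two strata or collapses different indices $i$.
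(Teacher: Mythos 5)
Your plan is correct and follows essentially the same route as the paper: compute the (lower-triangular) action on the basis $\pb{\lb{\Delta_{i,1}}}\cup\pb{\lb{\nabla_n}}$ via \eqref{E:auto:formula}, observe that $\phi\cdot\lb{\nabla_n}=\phi_{1,1}^{n+1}\lb{\nabla_n}$, stratify by the $\nabla_n$-coefficient, normalize, and read off the residual stabilizer $\phi_{1,1}^{n+1}=1$ acting by $\phi_{1,1}^{i+1}$ to obtain the cosets $\FF_{(i,n)}$, with $i=n$ degenerate exactly as you say. The only substantive point your sketch defers is the surjectivity claim behind ``kill as many $\mu_i$ as possible'' --- that the off-diagonal quantities $\phi_{i,k}\phi_{1,1}$ realize arbitrary values as $\phi_{2,1},\ldots$ vary --- which the paper isolates as an inductive arithmetic lemma (Lemma~\ref{L:action:arith}) and which does follow from the triangularity \eqref{E:aut:fij} you invoke.
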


From this result and Proposition~\ref{P:hh:lbc}, we immediately deduce the orbit space decomposition in the bicommutative case.

\begin{cor}\label{C:action:orbits:bicom}
Assume that $\FF$ is algebraically closed of characteristic $0$ and let $n\geq 2$. The following elements of $\Z{BC}(\mu_0^n, \FF)$ give a complete list of distinct representatives of the orbits of the automorphism group $\Aut (\mu_0^n)$ on $\HH{BC}(\mu_0^n, \FF):$ 
\begin{description}
\item[(Case $n=2$)]\hfill
\begin{enumerate}[label=\textup{(\alph*)}]
\item $0;$
\item $\Delta_{2, 1};$
\item $\pb{\nabla_2+\mu\Delta_{2, 1}\mid \mu\in\FF}.$
\end{enumerate}
\item[(Case $n>2$)]\hfill
\begin{enumerate}[label=\textup{(\alph*)}]
\item $0;$
\item $\Delta_{2, 1};$
\item $\nabla_n;$
\item $\pb{\nabla_n+\overline{\mu}\Delta_{2, 1}\mid \overline{\mu}\in\FF_{(2, n)}}.$
\end{enumerate}
\end{description}
\end{cor}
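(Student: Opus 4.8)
\section*{Proof proposal}

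The plan is to obtain the bicommutative orbit decomposition as a direct restriction of the left commutative one in Theorem~\ref{T:action:orbits:lcom}, exploiting that $\HH{BC}(\mu_0^n, \FF)$ sits inside $\HH{LC}(\mu_0^n, \FF)$ as an $\Aut (\mu_0^n)$-invariant subspace. First I would record two facts from the general Skjelbred--Sund framework of Section~\ref{S:general}. Since $\mathrm{BC}$ is a variety defined by identities, the action of $\Aut (\mu_0^n)$ preserves $\Z{BC}(\mu_0^n, \FF)$, and it always preserves the common coboundary space $\B{}(\mu_0^n, \FF)$ (because $\phi\cdot\delta f=\delta(f\circ\phi)$); as $\B{}(\mu_0^n, \FF)\subseteq\Z{BC}(\mu_0^n, \FF)\subseteq\Z{LC}(\mu_0^n, \FF)$, passing to quotients realizes $\HH{BC}(\mu_0^n, \FF)$ as an $\Aut (\mu_0^n)$-invariant subspace of $\HH{LC}(\mu_0^n, \FF)$. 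By Proposition~\ref{P:hh:lbc}\ref{P:hh:lbc:b}--\ref{P:hh:lbc:c}, this subspace is exactly $\langle\lb{\Delta_{2, 1}},\lb{\nabla_n}\rangle$ inside the larger space with basis $\pb{\lb{\Delta_{i,1}}\mid 2\leq i\leq n}\cup\pb{\lb{\nabla_n}}$.

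The key reduction is then purely formal. Because $\HH{BC}(\mu_0^n, \FF)$ is invariant, the $\Aut (\mu_0^n)$-orbit of any class lying in it stays inside it, so the orbits of $\Aut (\mu_0^n)$ on $\HH{BC}(\mu_0^n, \FF)$ are precisely the orbits on $\HH{LC}(\mu_0^n, \FF)$ that are contained in $\HH{BC}(\mu_0^n, \FF)$; each such orbit is described by its unique representative from Theorem~\ref{T:action:orbits:lcom}, which therefore automatically belongs to $\HH{BC}(\mu_0^n, \FF)$. Thus a complete, irredundant list of orbit representatives on $\HH{BC}(\mu_0^n, \FF)$ is obtained by retaining from the left commutative list exactly those representatives that fall inside $\langle\lb{\Delta_{2, 1}},\lb{\nabla_n}\rangle$, i.e.\ those whose coordinate along $\lb{\Delta_{i,1}}$ vanishes for every $3\leq i\leq n$.

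It then remains to carry out this inspection, splitting according to $n$. When $n=2$ one has $\dim\HH{LC}(\mu_0^n, \FF)=2=\dim\HH{BC}(\mu_0^n, \FF)$, so the two spaces coincide, item \textup{(d)} of Theorem~\ref{T:action:orbits:lcom} is empty, and the whole left commutative list survives, yielding $0$, $\Delta_{2,1}$ and $\pb{\nabla_2+\mu\Delta_{2,1}\mid \mu\in\FF}$. When $n>2$ the subspace $\HH{BC}(\mu_0^n, \FF)$ is proper: from item \textup{(b)} only $\Delta_{2,1}$ survives (the $\Delta_{i,1}$ with $i\geq 3$ lie outside); in item \textup{(c)} one is forced to $\mu=0$, since $\Delta_{n,1}$ with $n\geq 3$ is not in $\HH{BC}(\mu_0^n, \FF)$, leaving $\nabla_n$; and in item \textup{(d)} only the index $i=2$ is retained, giving $\pb{\nabla_n+\overline{\mu}\Delta_{2,1}\mid \overline{\mu}\in\FF_{(2, n)}}$. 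Together with $0$ this is exactly the asserted list.

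The only point requiring care, and the place I expect the wording to be most delicate, is the compatibility of the two orbit structures: one must be certain that distinct left commutative orbits contained in $\HH{BC}(\mu_0^n, \FF)$ stay distinct and exhaust all bicommutative orbits. This is immediate once one notes that the group acting in both settings is literally the same $\Aut (\mu_0^n)$, so the orbit of a point of $\HH{BC}(\mu_0^n, \FF)$ is the identical set of classes whether computed in $\HH{BC}(\mu_0^n, \FF)$ or in $\HH{LC}(\mu_0^n, \FF)$; no merging or splitting of orbits can occur. With this observation the corollary follows from Theorem~\ref{T:action:orbits:lcom} and Proposition~\ref{P:hh:lbc} by the case analysis above.
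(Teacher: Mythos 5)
Your proposal is correct and is essentially the argument the paper intends: the paper derives the corollary by exactly this restriction, noting that $\HH{BC}(\mu_0^n,\FF)=\langle\lb{\Delta_{2,1}},\lb{\nabla_n}\rangle$ is an $\Aut(\mu_0^n)$-invariant subspace of $\HH{LC}(\mu_0^n,\FF)$ and retaining from the list of Theorem~\ref{T:action:orbits:lcom} precisely those representatives lying in it. Your explicit justification that orbits neither merge nor split under this restriction (same group, invariant subspace) is a correct elaboration of what the paper leaves as ``immediately deduce.''
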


We devote the remainder of this section to the proof of Theorem~\ref{T:action:orbits:lcom}.

\begin{lemma}\label{L:action:arith}
Assume that $\FF$ is algebraically closed of characteristic $0$. Fix $i\geq 2$. Given scalars $a_{1, 1}, \ldots, a_{i-1, 1}\in\FF$ with $a_{1, 1}\neq 0$, we define, for $2\leq \ell, m\leq i:$
\begin{equation}\label{E:L:action:arith}
a_{\ell, m}=\sum_{j_1+\cdots+j_m=\ell}a_{j_1, 1}\cdots a_{j_m, 1}.
\end{equation}
Note that $a_{\ell, m}$ depends only on $a_{1, 1}, \ldots, a_{\ell-m+1, 1}$. Then, for any $0\leq k\leq i-1$, the map
\begin{equation*}
\begin{array}{rccc}
\rho_k:&\FF^*\times\FF^k&\longrightarrow& \FF^*\times\FF^k\\
&\seq{a_{1, 1}, \ldots, a_{k+1, 1}}&\mapsto&\seq{a_{i,i}a_{1,1}, a_{i, i-1}a_{1, 1}, \ldots, a_{i, i-k}a_{1,1}}
\end{array}
\end{equation*}
is onto and $(i+1)$-to-$1$, i.e.\ $\left| \rho_k^{-1}(\lambda_1, \ldots, \lambda_{k+1})\right|=i+1$, for all $(\lambda_1, \ldots, \lambda_{k+1})\in \FF^*\times\FF^k$.
\end{lemma}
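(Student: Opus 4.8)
The plan is to exploit the \emph{triangular} structure of $\rho_k$. Observe that its $(t+1)$-st output coordinate is $a_{i, i-t}\,a_{1,1}$ for $0\leq t\leq k$, and by the noted dependency $a_{i,i-t}$ is a polynomial in $a_{1,1}, \ldots, a_{t+1,1}$ only. So I would solve $\rho_k(a_{1,1}, \ldots, a_{k+1,1})=(\lambda_1, \ldots, \lambda_{k+1})$ one coordinate at a time: determine $a_{1,1}$ from the first coordinate, then each $a_{t+1,1}$ from the $(t+1)$-st, keeping track of the number of solutions as I go.

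First I would treat the top coordinate. Since the unique composition of $i$ into $i$ parts is $(1, \ldots, 1)$, we have $a_{i,i}=a_{1,1}^{i}$, so the first coordinate reads $a_{1,1}^{\,i+1}=\lambda_1$. Because $\FF$ is algebraically closed of characteristic $0$, the polynomial $X^{i+1}-\lambda_1$ has nonzero derivative and $\lambda_1\neq 0$, so it has exactly $i+1$ distinct roots, all lying in $\FF^*$. Thus $a_{1,1}$ admits exactly $i+1$ admissible values, each respecting the domain constraint $a_{1,1}\in\FF^*$.

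Next, fixing one such $a_{1,1}$, I would show that each subsequent coordinate pins down the next variable uniquely. The key combinatorial point is that, among the compositions $j_1+\cdots+j_{i-t}=i$, the only ones involving $a_{t+1,1}$ to the first power are those with a single part equal to $t+1$ and all remaining parts equal to $1$ (the remaining $i-t-1$ parts sum to $i-t-1$ and are therefore forced); there are exactly $i-t$ of these. Consequently
\[
a_{i, i-t}=(i-t)\,a_{1,1}^{\,i-t-1}\,a_{t+1,1}+q_t(a_{1,1}, \ldots, a_{t,1}),
\]
for some polynomial $q_t$, so that the $(t+1)$-st equation becomes $(i-t)\,a_{1,1}^{\,i-t}\,a_{t+1,1}+a_{1,1}\,q_t=\lambda_{t+1}$. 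Since $1\leq i-t$ (as $t\leq k\leq i-1$) and $\chara(\FF)=0$, the scalar $(i-t)\,a_{1,1}^{\,i-t}$ is nonzero; with $a_{1,1}, \ldots, a_{t,1}$ already fixed, this linear equation has a unique solution $a_{t+1,1}\in\FF$.

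Proceeding inductively on $t=1, \ldots, k$ then determines $a_{2,1}, \ldots, a_{k+1,1}$ uniquely once $a_{1,1}$ is chosen, proving simultaneously that $\rho_k$ is onto and exactly $(i+1)$-to-$1$. The main obstacle is the combinatorial extraction of the leading coefficient $(i-t)\,a_{1,1}^{\,i-t-1}$ and the verification that it never vanishes; both the separability of $X^{i+1}-\lambda_1$ and the nonvanishing of the integer $i-t$ in $\FF$ rely essentially on the characteristic being $0$, which is why this hypothesis is imposed.
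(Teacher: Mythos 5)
Your proposal is correct and follows essentially the same route as the paper: solve the triangular system coordinate by coordinate, getting exactly $i+1$ choices for $a_{1,1}$ from $a_{1,1}^{i+1}=\lambda_1$ and then a unique $a_{t+1,1}$ at each subsequent step because the leading coefficient $(i-t)a_{1,1}^{i-t}$ is nonzero in characteristic $0$. The only difference is cosmetic: you derive the linearity of $a_{i,i-t}$ in $a_{t+1,1}$ by counting compositions directly, whereas the paper invokes the previously stated formula \eqref{E:aut:fij} and organizes the argument as an explicit induction on $k$.
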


\begin{proof}
The proof is by induction on $0\leq k\leq i-1$. First, notice that \eqref{E:L:action:arith} implies that $a_{\ell, \ell}=a_{1, 1}^\ell$. 
If $k=0$, then $\rho_0(a_{1, 1})=a_{1, 1}^{i+1}$ and the relation $\left| \rho_0^{-1}(\lambda
)\right|=i+1$ for any $\lambda\in\FF^*$ follows from both our assumptions on $\FF$.

Now assume that $k\geq 1$ and take $(\lambda_1, \ldots, \lambda_{k+1})\in \FF^*\times\FF^k$. By the induction hypothesis, $\left| \rho_{k-1}^{-1}(\lambda_1, \ldots, \lambda_{k})\right|=i+1$, so choose $(a_{1, 1}, \ldots, a_{k, 1})\in\FF^*\times\FF^{k-1}$ such that $\rho_{k-1}(a_{1, 1}, \ldots, a_{k, 1})=(\lambda_1, \ldots, \lambda_{k})$. As in \eqref{E:aut:fij}, we have $a_{i, i-k}a_{1,1}=(i-k)a_{1, 1}^{i-k}a_{k+1, 1}+a_{1, 1}p_{i, i-k}\seq{a_{1, 1}, \ldots, a_{k, 1}}$, with $p_{i, i-k}$ a polynomial in $a_{1, 1}, \ldots, a_{k, 1}$. Thus, we can set
\begin{equation}\label{E:akp11}
a_{k+1, 1}=\frac{\lambda_{k+1}-a_{1, 1}p_{i, i-k}\seq{a_{1, 1}, \ldots, a_{k, 1}}}{(i-k)a_{1, 1}^{i-k}},
\end{equation}
and, by construction, $\rho_{k}(a_{1, 1}, \ldots, a_{k+1, 1})=\seq{\rho_{k-1}(a_{1, 1}, \ldots, a_{k, 1}), a_{i, i-k}a_{1,1}}=(\lambda_1, \ldots, \lambda_{k+1})$.

The above shows that the $i+1$ distinct solutions of the problem for $k-1$ give rise to $i+1$ distinct solutions of the problem for $k$. Conversely, each solution $(a_{1, 1}, \ldots, a_{k+1, 1})$ of the latter determines a solution $(a_{1, 1}, \ldots, a_{k, 1})$ of the former and, by \eqref{E:akp11}, $a_{k+1, 1}$ is completely determined by $a_{1, 1}, \ldots, a_{k, 1}$, so there are exactly $i+1$ solutions of the problem for $k$. By induction, the proof is complete.
\end{proof}

Now we can start computing orbits of $\Aut (\mu_0^n)$ on $\Z{LC}(\mu_0^n, \FF)$.

\begin{prop}\label{T:action:orbits:lcom:1}
Assume that $\FF$ is algebraically closed of characteristic $0$. For $i\in [n]$ we have
\begin{equation*}
\orb\seq{\Delta_{i, 1}}=\pb{\lambda_1\Delta_{i, 1}+\lambda_2 \Delta_{i-1, 1}+\cdots +\lambda_i \Delta_{1, 1}\mid \lambda_1, \ldots, \lambda_i\in\FF, \lambda_1\neq 0}. 
\end{equation*}
\end{prop}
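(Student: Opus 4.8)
The plan is to evaluate the action formula \eqref{E:auto:formula} on the cocycle $\Delta_{i,1}$ and to recognize the resulting coefficient assignment as precisely the map $\rho_{i-1}$ analyzed in Lemma~\ref{L:action:arith}. First I would specialize \eqref{E:auto:formula} with $s=i$ and $t=1$. Since the only index $\ell\leq 1$ is $\ell=1$, the double sum collapses to a single sum over $k\leq i$; and since $\phi_{i,k}=0$ whenever $k>i$, this yields
\[
\phi\cdot\Delta_{i, 1}=\phi_{1,1}\sum_{k=1}^{i}\phi_{i,k}\,\Delta_{k, 1}.
\]
Using $\phi_{i,i}=\phi_{1,1}^{i}$, the coefficient of $\Delta_{i,1}$ is $\phi_{1,1}^{i+1}\neq 0$, so $\orb(\Delta_{i, 1})$ is contained in the right-hand side of the statement; this gives one inclusion essentially for free.

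For the reverse inclusion I would fix a target $\lambda_1\Delta_{i, 1}+\cdots+\lambda_i\Delta_{1, 1}$ with $\lambda_1\neq 0$ and look for $\phi\in\Aut(\mu_0^n)$ realizing it. Reading off the coefficient of $\Delta_{k,1}$ in the display above, realizing the target amounts to solving
\[
\lambda_{i-k+1}=\phi_{1,1}\phi_{i, k}, \qquad 1\leq k\leq i.
\]
Making the identification $a_{j, 1}=\phi_{j, 1}$, so that $a_{\ell, m}=\phi_{\ell, m}$ by \eqref{E:L:action:arith}, the left-hand column $(\lambda_1, \ldots, \lambda_i)$ is exactly the image $\rho_{i-1}(\phi_{1,1}, \ldots, \phi_{i, 1})$, where the $m$-th component of $\rho_{i-1}$ is $a_{i, i-m+1}a_{1,1}=\phi_{1,1}\phi_{i, i-m+1}$.

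Here is where I would invoke Lemma~\ref{L:action:arith} with $k=i-1$: since $\rho_{i-1}\colon \FF^*\times\FF^{i-1}\longrightarrow\FF^*\times\FF^{i-1}$ is onto, there exist $\phi_{1,1}\in\FF^*$ and $\phi_{2, 1}, \ldots, \phi_{i, 1}\in\FF$ whose image under $\rho_{i-1}$ is $(\lambda_1, \ldots, \lambda_i)$. Completing these to a genuine automorphism by choosing the remaining free parameters $\phi_{i+1, 1}, \ldots, \phi_{n, 1}$ arbitrarily (these do not enter the expression for $\phi\cdot\Delta_{i, 1}$) produces the desired $\phi$, establishing the reverse inclusion.

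The main difficulty is essentially bookkeeping: one must verify carefully that the coefficient map extracted from \eqref{E:auto:formula} coincides with $\rho_{i-1}$, matching both the index ranges and the reversal in the ordering of components. The one genuine edge case is $i=1$, which lies outside the hypothesis $i\geq 2$ of Lemma~\ref{L:action:arith}; here the computation degenerates to $\phi\cdot\Delta_{1, 1}=\phi_{1,1}^2\,\Delta_{1, 1}$, and since $\FF$ is algebraically closed every nonzero scalar is a square, so the statement holds directly.
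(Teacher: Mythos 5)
Your proposal is correct and follows essentially the same route as the paper: specialize the action formula \eqref{E:auto:formula} to $\phi\cdot\Delta_{i,1}=\phi_{1,1}\sum_{k=1}^{i}\phi_{i,k}\Delta_{k,1}$ for the direct inclusion, then invoke the surjectivity of $\rho_{i-1}$ from Lemma~\ref{L:action:arith} (with the identification $a_{\ell,m}=\phi_{\ell,m}$) for the reverse one. Your explicit handling of the edge case $i=1$, which falls outside the hypothesis $i\geq 2$ of Lemma~\ref{L:action:arith}, is a small but welcome refinement that the paper's proof glosses over.
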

\begin{proof}
Let $\phi=\seq{\phi_{i,j}}\in\Aut (\mu_0^n).$ Then, by \eqref{E:auto:formula}, $\phi\cdot\Delta_{i, 1}=\phi_{1,1}^{i+1}\Delta_{i, 1} + \sum_{k=1}^{i-1}\phi_{i,k}\phi_{1,1}\Delta_{k, 1}$ and as $\phi_{1, 1}\neq 0$, the direct inclusion in the statement is proved.

Conversely, given $(\lambda_1, \ldots, \lambda_{i})\in \FF^*\times\FF^{i-1}$, Lemma~\ref{L:action:arith} gives $(a_{1, 1}, \ldots, a_{i, 1})\in\FF^*\times\FF^{i-1}$ such that $\lambda_{i-k+1}=a_{i,k}a_{1, 1}$, for all $k\in[i]$. Since $a_{1, 1}\neq 0$, there exists $\phi\in\Aut (\mu_0^n)$ such that $\phi_{k, 1}=a_{k, 1}$, for all $k\in[i]$. For any such $\phi\in \Aut (\mu_0^n)$ and $j\in[i]$, we have 
\begin{equation*}
a_{i, j}=\sum_{k_1+\cdots+k_j=i}a_{k_1, 1}\cdots a_{k_j, 1}=\sum_{k_1+\cdots+k_j=i}\phi_{k_1, 1}\cdots \phi_{k_j, 1}=\phi_{i, j},
\end{equation*}
so $\phi\cdot\Delta_{i, 1}=\sum_{k=1}^{i}\phi_{i,k}\phi_{1,1}\Delta_{k, 1}=\sum_{k=1}^{i}a_{i,k}a_{1,1}\Delta_{k, 1}=\sum_{k=1}^{i}\lambda_{i-k+1,1}\Delta_{k, 1}$. This proves the reverse inclusion.
\end{proof}

Recall that the $\nabla_{j}$, for $ j\in [n-1]$, form a basis of $\B{}(\mu_0^n, \FF)$.

\begin{lemma}\label{L:phi:nabla}
Let $\phi=\seq{\phi_{i,j}}\in\Aut (\mu_0^n)$. In $\HH{LC}(\mu_0^n, \FF)$ we have $\phi\cdot\lb{\nabla_n}=\phi_{1, 1}^{n+1}\lb{\nabla_n}$.
\end{lemma}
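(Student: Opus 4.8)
The plan is to compute the action of an arbitrary automorphism $\phi$ on $\nabla_n$ directly using the formula~\eqref{E:auto:formula}, and then reduce the resulting expression modulo $\B{}(\mu_0^n, \FF)$ using the basis $\pb{\nabla_j\mid j\in [n-1]}$ of the coboundaries. Since $\nabla_n=\sum_{k=1}^n\Delta_{k, n+1-k}$, linearity gives $\phi\cdot\nabla_n=\sum_{k=1}^n\phi\cdot\Delta_{k, n+1-k}=\sum_{k=1}^n\sum_{p\leq k,\ q\leq n+1-k}\phi_{k, p}\phi_{n+1-k, q}\Delta_{p, q}$. The goal is to show that this expands as $\phi_{1,1}^{n+1}\nabla_n$ plus a linear combination of the lower $\nabla_j$ with $j\leq n-1$, so that the class in cohomology is exactly $\phi_{1,1}^{n+1}\lb{\nabla_n}$.

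First I would organize the double sum by collecting the coefficient of each $\Delta_{p, q}$. The key structural observation is that $\nabla_n$ is, up to the identification $\Delta_{p,q}\leftrightarrow\Delta_{q,p}$ under the flip, a \emph{symmetric} object summing over all $(p,q)$ with $p+q$ fixed. I expect the cleanest route is to show that the coefficient of $\Delta_{p,q}$ in $\phi\cdot\nabla_n$ depends only on the sum $p+q=:d$, so that $\phi\cdot\nabla_n=\sum_{d}c_d\nabla_d$ for suitable scalars $c_d$, with the sum ranging over $d\leq n$ (the terms with $d\geq n+1$ cannot appear, since $\phi_{i,j}=0$ for $j>i$ forces $p\leq k$ and $q\leq n+1-k$, whence $p+q\leq n+1$). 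Granting this, every $\nabla_d$ with $d\leq n-1$ is a coboundary and vanishes in $\HH{LC}(\mu_0^n, \FF)$, so only the $d=n$ term survives, and its coefficient $c_n$ is precisely the coefficient of $\Delta_{1, n}$ (or any single term with $p+q=n+1$). Reading off that coefficient, the only way to obtain $p+q=n+1$ with $p\leq k$ and $q\leq n+1-k$ is to saturate both inequalities, i.e.\ $p=k$ and $q=n+1-k$, which contributes $\phi_{k,k}\phi_{n+1-k, n+1-k}=\phi_{1,1}^k\phi_{1,1}^{n+1-k}=\phi_{1,1}^{n+1}$ using $\phi_{i,i}=\phi_{1,1}^i$; summing the identical contribution over all $k$ and matching against the $n$ terms of $\nabla_n$ yields $c_n=\phi_{1,1}^{n+1}$.

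The step I expect to require the most care is justifying that the coefficient of $\Delta_{p,q}$ in $\phi\cdot\nabla_n$ depends only on $d=p+q$, i.e.\ the symmetry claim that lets me write the whole image as a combination of the $\nabla_d$. This is really a statement about the homomorphism property $\phi_{i,j}=\sum_{k_1+\cdots+k_j=i}\phi_{k_1,1}\cdots\phi_{k_j,1}$ and the way products of these sum-over-compositions coefficients recombine; concretely, one wants to show that $\sum_{k}\phi_{k,p}\phi_{n+1-k,q}$, summed over the relevant range of $k$, is invariant under replacing $(p,q)$ by any other pair with the same sum. A clean way to see this is to interpret $\phi\cdot\nabla_n$ as the cocycle $(x,y)\mapsto\sum_k \nabla_n(\phi(x),\phi(y))$ evaluated on basis vectors and to exploit that $\nabla_n(e_a, e_b)$ depends only on $a+b$; since $\phi$ is multiplicative and $\mu_0^n$ is graded by the subscript, $\nabla_n\seq{\phi(e_p),\phi(e_q)}$ will likewise only feel the total degree $p+q$ modulo the truncation at $n$, and the invariance follows. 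Once this symmetry is in hand the computation is routine, and I would present it by simply identifying the coefficient $c_n$ as above rather than tracking every $c_d$ with $d<n$, since those are killed in cohomology.
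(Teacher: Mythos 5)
Your proposal follows the same skeleton as the paper's proof: expand $\phi\cdot\nabla_n$ via \eqref{E:auto:formula}, show that the coefficient of $\Delta_{p,q}$ depends only on $p+q$ so that $\phi\cdot\nabla_n=\sum_d c_d\nabla_d$, kill the terms with $d\leq n-1$ in cohomology, and read off $c_n=\phi_{1,1}^{n+1}$ from the saturated term $\phi_{p,p}\phi_{q,q}$. Where you diverge is in how the key symmetry is justified. The paper gets it almost for free: since $\Aut(\mu_0^n)$ preserves $\Z{LC}(\mu_0^n,\FF)$, the image $\phi\cdot\nabla_n$ is itself a left commutative cocycle, so the claim in Proposition~\ref{P:hh:lbc} already forces $\alpha_{i,j}=\alpha_{1,i+j-1}$ for $j\geq 2$, and a one-line reindexing of the explicit sum gives $\alpha_{i,j}=\alpha_{j,i}$, which settles the remaining column $j=1$. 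You instead aim for a direct structural argument, and here your preferred ``clean way'' is the weak point: the assertion that $\nabla_n(\phi(e_p),\phi(e_q))$ only feels $p+q$ ``since $\phi$ is multiplicative and $\mu_0^n$ is graded'' is not a proof as stated, because $\phi$ does \emph{not} preserve the grading ($\phi(e_1)=\phi_{1,1}e_1+\phi_{2,1}e_2+\cdots$). Two repairs are available. Your first suggested route does work: expanding $\phi_{k,p}$ as a sum over compositions gives $\sum_{k}\phi_{k,p}\phi_{n+1-k,q}=\sum_{j_1+\cdots+j_{p+q}=n+1}\phi_{j_1,1}\cdots\phi_{j_{p+q},1}$, which manifestly depends only on $p+q$. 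Alternatively, and closer in spirit to what you were reaching for, use that $\nabla_n\in\Z{AS}(\mu_0^n,\FF)$, i.e.\ $\nabla_n(xy,z)=\nabla_n(x,yz)$, together with $\phi(e_p)=\phi(e_1)^p$, to shuffle factors: $\nabla_n\seq{\phi(e_1)^p,\phi(e_1)^q}=\nabla_n\seq{\phi(e_1)^{p-1},\phi(e_1)^{q+1}}$, so the value depends only on $p+q$. With either repair your argument is complete and arguably more self-contained than the paper's, which leans on the internal structure of left commutative cocycles; the paper's route, in turn, buys brevity by recycling Proposition~\ref{P:hh:lbc}.
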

\begin{proof}
If $i+j>n+1$, then $\Delta_{i, j}$ does not occur in $\phi\cdot\nabla_n$. Otherwise, for $i+j\leq n+1$, the coefficient of $\Delta_{i,j}$ in the expression for $\phi\cdot\nabla_n$ is
\begin{equation*}
\alpha_{i,j}=\sum_{k=i}^{n+1-j}\phi_{k,i}\phi_{n+1-k,j}, 
\end{equation*}
so that $\phi\cdot\nabla_n=\sum_{i+j\leq n+1}\alpha_{i,j}\Delta_{i,j}$. 

Since $\Aut (\mu_0^n)$ acts on $\Z{LC}(\mu_0^n, \FF)$ and on $\B{}(\mu_0^n, \FF)$, we must have $\phi\cdot\nabla_n\in\Z{LC}(\mu_0^n, \FF)\setminus\B{}(\mu_0^n, \FF)$. Thus, by the proof of Proposition~\ref{P:hh:lbc}, we have $\alpha_{1, k}=\alpha_{2, k-1}=\cdots=\alpha_{k-1, 2}$ for every $2\leq k\leq n$. Moreover, by reparameterizing the summation index, we obtain $\alpha_{i,j}=\sum_{\ell=j}^{n+1-i}\phi_{n+1-\ell, i}\phi_{\ell, j}=\alpha_{j, i}$; in particular, $\alpha_{k, 1}=\alpha_{1, k}$. It follows that 
\begin{equation*}
\phi\cdot\nabla_n=\sum_{k=1}^n\alpha_{1, k}\nabla_k.
\end{equation*}
Now, as $\lb{\nabla_k}=0$, for all $k\in[n-1]$, we get $\phi\cdot\lb{\nabla_n}=\alpha_{1, n}\lb{\nabla_n}=\phi_{1, 1}^{n+1}\lb{\nabla_n}$. 
\end{proof}

\begin{prop}\label{T:action:orbits:lcom:2}
Assume that $\FF$ is algebraically closed of characteristic $0$. Let $\mu\in \FF^*$. We have
\begin{equation*}
\orb\seq{\lb{\nabla_n+\mu\Delta_{n, 1}}}=\pb{\lambda_1\lb{\nabla_{n}}+\mu\lambda_1\lb{\Delta_{n, 1}}+\sum_{k=2}^{n-1}\lambda_{n+1-k} \lb{\Delta_{k, 1}}\mid \lambda_1, \ldots, \lambda_{n-1}\in\FF, \lambda_1\neq 0}. 
\end{equation*}
\end{prop}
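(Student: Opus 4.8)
The plan is to compute the image of $\lb{\nabla_n+\mu\Delta_{n,1}}$ under an arbitrary automorphism $\phi=\seq{\phi_{i,j}}\in\Aut(\mu_0^n)$ by combining the action formula~\eqref{E:auto:formula} with Lemma~\ref{L:phi:nabla}, and then to establish the reverse inclusion by invoking the surjectivity part of Lemma~\ref{L:action:arith}. The direct inclusion is a short calculation; the content of the statement lies in the realization of every element of the claimed set, which is precisely what Lemma~\ref{L:action:arith} is designed to supply.

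First I would apply~\eqref{E:auto:formula} to $\Delta_{n,1}$. As the second index equals $1$, only $\ell=1$ contributes, so $\phi\cdot\Delta_{n,1}=\phi_{1,1}\sum_{k=1}^n\phi_{n,k}\Delta_{k,1}$. Passing to $\HH{LC}(\mu_0^n,\FF)$ and using that $\lb{\Delta_{1,1}}=\lb{\nabla_1}=0$ together with $\phi_{n,n}=\phi_{1,1}^n$, the $k=1$ term drops and the $k=n$ term becomes $\phi_{1,1}^{n+1}\lb{\Delta_{n,1}}$, giving
\[
\phi\cdot\lb{\Delta_{n,1}}=\phi_{1,1}^{n+1}\lb{\Delta_{n,1}}+\phi_{1,1}\sum_{k=2}^{n-1}\phi_{n,k}\lb{\Delta_{k,1}}.
\]
Since Lemma~\ref{L:phi:nabla} gives $\phi\cdot\lb{\nabla_n}=\phi_{1,1}^{n+1}\lb{\nabla_n}$, setting $\lambda_1=\phi_{1,1}^{n+1}$ and $\lambda_{n+1-k}=\mu\phi_{1,1}\phi_{n,k}$ for $2\leq k\leq n-1$ yields
\[
\phi\cdot\lb{\nabla_n+\mu\Delta_{n,1}}=\lambda_1\lb{\nabla_n}+\mu\lambda_1\lb{\Delta_{n,1}}+\sum_{k=2}^{n-1}\lambda_{n+1-k}\lb{\Delta_{k,1}},
\]
which is exactly an element of the right-hand side, with $\lambda_1\neq 0$ because $\phi_{1,1}\neq0$. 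This proves the direct inclusion.

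For the reverse inclusion I would fix an arbitrary $(\lambda_1,\ldots,\lambda_{n-1})\in\FF^*\times\FF^{n-2}$ and apply Lemma~\ref{L:action:arith} with $i=n$ and $k=n-2$. The map $\rho_{n-2}$ sends $\seq{a_{1,1},\ldots,a_{n-1,1}}$ to $\seq{a_{n,n}a_{1,1},a_{n,n-1}a_{1,1},\ldots,a_{n,2}a_{1,1}}$. Because $\mu\in\FF^*$, the tuple $\seq{\lambda_1,\lambda_2/\mu,\ldots,\lambda_{n-1}/\mu}$ lies in $\FF^*\times\FF^{n-2}$, and surjectivity of $\rho_{n-2}$ furnishes a preimage $\seq{a_{1,1},\ldots,a_{n-1,1}}$ with $a_{1,1}\neq0$. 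Choosing $\phi\in\Aut(\mu_0^n)$ with $\phi_{k,1}=a_{k,1}$ for $k\in[n-1]$ (the entry $\phi_{n,1}$ being free), exactly as in the proof of Proposition~\ref{T:action:orbits:lcom:1} one has $\phi_{n,j}=a_{n,j}$; feeding this into the computation above recovers the prescribed $\lambda$'s, so the claimed element lies in the orbit.

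The one genuinely substantive ingredient is the surjectivity of $\rho_{n-2}$, but this is already granted by Lemma~\ref{L:action:arith}, so no new difficulty arises here. The part requiring care is purely bookkeeping: correctly reducing $\phi\cdot\Delta_{n,1}$ modulo the coboundaries $\lb{\nabla_j}=0$ ($j\in[n-1]$), matching the index $n+1-k$ with the coordinates of $\rho_{n-2}$, and using $\mu\neq0$ to absorb the scaling $\lambda_{n+1-k}=\mu\phi_{1,1}\phi_{n,k}$ when inverting.
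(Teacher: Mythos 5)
Your proof is correct and follows essentially the same route as the paper: compute $\phi\cdot\lb{\nabla_n+\mu\Delta_{n,1}}$ via \eqref{E:auto:formula} and Lemma~\ref{L:phi:nabla} for the direct inclusion, then invoke the surjectivity of the maps from Lemma~\ref{L:action:arith} for the converse. The only (immaterial) difference is that you use $\rho_{n-2}$ and leave $\phi_{n,1}$ free, whereas the paper uses $\rho_{n-1}$ with last coordinate prescribed to be $0$; both work since the $\lb{\Delta_{1,1}}$ term vanishes in cohomology.
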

\begin{proof}
Let $\phi=\seq{\phi_{i,j}}\in\Aut (\mu_0^n)$. Then, using Lemma~\ref{L:phi:nabla} and recalling that $\lb{\Delta_{1, 1}}=\lb{\nabla_1}=0$,
\begin{align}\label{E:action:orbits:lcom:2}
%\begin{split}
\phi\cdot\lb{\nabla_n+\mu\Delta_{n, 1}} 
%&=\phi_{1, 1}^{n+1}\lb{\nabla_n}-\sum_{k=2}^{n-1}\phi_{n,k}\phi_{1,1}\lb{\Delta_{k, 1}}+\mu\sum_{k=2}^n \phi_{n,k}\phi_{1,1}\lb{\Delta_{k, 1}}\\&
=\phi_{1, 1}^{n+1}\lb{\nabla_n}+\mu\phi_{n,n}\phi_{1,1}\lb{\Delta_{n, 1}}
+\mu\sum_{k=2}^{n-1}\phi_{n,k}\phi_{1,1}\lb{\Delta_{k, 1}}.
%\end{split}
\end{align}
Since $\phi_{n,n}=\phi_{1,1}^n$ and $\phi_{1,1}\neq 0$, the direct inclusion in the statement follows.

Conversely, let $\lambda_1, \ldots, \lambda_{n-1}\in\FF$, with $\lambda_1\neq 0$. Consider the map $\rho_{n-1}$ from Lemma~\ref{L:action:arith}. By that result, for any $\mu\in\FF^*$, there are $\seq{a_{1,1}, \ldots, a_{n, 1}}\in\FF^*\times\FF^{n-1}$ such that $\rho_{n-1}\seq{a_{1,1}, \ldots, a_{n, 1}}=\seq{\lambda_1, \frac{\lambda_2}{\mu}, \ldots, \frac{\lambda_{n-1}}{\mu}, 0}$. Let $\phi\in\Aut (\mu_0^n)$ be determined by $\phi_{k, 1}=a_{k, 1}$, for all $k\in[n]$. Then $\phi_{i,j}=a_{i,j}$ for all $i, j\in[n]$. It follows that $\phi_{1, 1}^{n+1}=\phi_{n,n}\phi_{1, 1}=\lambda_1$ and similarly $\mu\phi_{n,k}\phi_{1, 1}=\lambda_{n+1-k}$ for all $2\leq k\leq n-1$. Thence, by \eqref{E:action:orbits:lcom:2},
\begin{equation*}
\phi\cdot\lb{\nabla_n+\mu\Delta_{n, 1}}= 
\lambda_1\lb{\nabla_n}+\mu\lambda_1\lb{\Delta_{n, 1}}
+\sum_{k=2}^{n-1}\lambda_{n+1-k}\lb{\Delta_{k, 1}},
\end{equation*}
proving the reverse inclusion.
\end{proof}

\begin{prop}\label{T:action:orbits:lcom:3}
Assume that $\FF$ is algebraically closed of characteristic $0$. For $2\leq i\leq n-1$ and $\mu\in \FF^*$ we have
\begin{equation*}
\orb \seq{\lb{\nabla_n+\mu\Delta_{i, 1}}}=\pb{\lambda^{n+1}\lb{\nabla_{n}}+\lambda^{i+1}\mu\lb{\Delta_{i, 1}}+\sum_{j=2}^{i-1}\lambda_{i+1-j} \lb{\Delta_{j, 1}}\mid \lambda_2, \ldots, \lambda_{i-1}\in\FF, \lambda\in\FF^*}. 
\end{equation*}
\end{prop}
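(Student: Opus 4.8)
The strategy is to compute the orbit of $\lb{\nabla_n+\mu\Delta_{i, 1}}$ directly by applying an arbitrary $\phi\in\Aut (\mu_0^n)$ and then using the arithmetic of Lemma~\ref{L:action:arith} to show that every element of the claimed set is attained. First I would apply $\phi$ to the representative. Using Lemma~\ref{L:phi:nabla} we have $\phi\cdot\lb{\nabla_n}=\phi_{1,1}^{n+1}\lb{\nabla_n}$, and by the action formula \eqref{E:auto:formula} together with the fact that $\lb{\Delta_{1,1}}=0$ in $\HH{LC}(\mu_0^n,\FF)$, we get
\[
\phi\cdot\lb{\nabla_n+\mu\Delta_{i, 1}}
=\phi_{1, 1}^{n+1}\lb{\nabla_n}+\mu\phi_{i,i}\phi_{1,1}\lb{\Delta_{i, 1}}
+\mu\sum_{k=2}^{i-1}\phi_{i,k}\phi_{1,1}\lb{\Delta_{k, 1}}.
\]
Since $\phi_{i,i}=\phi_{1,1}^i$, setting $\lambda=\phi_{1,1}$ shows that the coefficient of $\lb{\nabla_n}$ is $\lambda^{n+1}$ and that of $\lb{\Delta_{i,1}}$ is $\mu\lambda^{i+1}$, which matches the shape of the right-hand side and establishes the direct inclusion.

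For the reverse inclusion, given target scalars $\lambda\in\FF^*$ and $\lambda_2,\dots,\lambda_{i-1}\in\FF$, I must produce an automorphism realizing them. The key constraint is that the coefficients of $\lb{\nabla_n}$ and $\lb{\Delta_{i,1}}$ are \emph{both} governed by a single parameter $\phi_{1,1}=\lambda$, so I cannot prescribe them independently; this is exactly why the representative is parameterized by $\lambda\in\FF^*$ rather than by two free scalars, in contrast to Proposition~\ref{T:action:orbits:lcom:2}. Having fixed $\phi_{1,1}=\lambda$, I would then invoke Lemma~\ref{L:action:arith} with the appropriate value of $k$ (here $k=i-2$, so that the relevant map is $\rho_{i-2}\colon\FF^*\times\FF^{i-2}\to\FF^*\times\FF^{i-2}$) to choose $\phi_{2,1},\dots,\phi_{i-1,1}$ solving $\mu\phi_{i,k}\phi_{1,1}=\lambda_{i+1-k}$ for $2\leq k\leq i-1$. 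Because $\rho_{i-2}$ is onto by that lemma, such a preimage exists, and since $a_{1,1}\neq 0$ the resulting data extend to a genuine automorphism; the remaining entries $\phi_{k,1}$ for $k>i$ are free and irrelevant to the relevant coefficients.

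The main subtlety—and the place to be careful rather than the place of real difficulty—is the bookkeeping of indices in the surjectivity argument: I must match the slots of $\rho_{i-2}$ to the coefficients $\lb{\Delta_{k,1}}$ correctly and confirm that fixing $\phi_{1,1}=\lambda$ forces the first coordinate of the target vector (corresponding to $\phi_{i,i}\phi_{1,1}=\lambda^{i+1}$) to the value $\lambda^{i+1}\mu/\mu=\lambda^{i+1}$, which is consistent with Lemma~\ref{L:action:arith} computing $a_{i,i}a_{1,1}=a_{1,1}^{i+1}$. Once the dictionary between the $\phi_{i,k}$ and the outputs of $\rho_{i-2}$ is set up, surjectivity of $\rho_{i-2}$ immediately yields the reverse inclusion, completing the proof. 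The genuinely new phenomenon here, as opposed to Proposition~\ref{T:action:orbits:lcom:2}, is the rigid coupling $\lambda^{n+1}\leftrightarrow\lambda^{i+1}\mu$ of the two distinguished coefficients, which later forces the nontrivial quotient $\FF_{(i,n)}$ when distinct representatives are sorted out in Theorem~\ref{T:action:orbits:lcom}.
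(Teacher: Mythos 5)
Your proof is correct and follows essentially the same route as the paper: the same expansion of $\phi\cdot\lb{\nabla_n+\mu\Delta_{i,1}}$ via Lemma~\ref{L:phi:nabla} and \eqref{E:auto:formula}, the same observation that the $\lb{\nabla_n}$- and $\lb{\Delta_{i,1}}$-coefficients are rigidly coupled through $\phi_{1,1}$, and the same use of $\rho_{i-2}$ from Lemma~\ref{L:action:arith} for the reverse inclusion. The one point the paper makes more explicit, and which you should too, is that surjectivity of $\rho_{i-2}$ alone does not hand you a preimage with the \emph{prescribed} value $a_{1,1}=\lambda$; rather, the $(i+1)$-to-$1$ property shows that exactly one of the $i+1$ solutions over $\seq{\lambda^{i+1},\lambda_2/\mu,\ldots,\lambda_{i-1}/\mu}$ has $a_{1,1}=\lambda$, and this matters because a solution with $a_{1,1}=\zeta\lambda$ for a nontrivial $(i+1)$-st root of unity $\zeta$ would produce the wrong $\lb{\nabla_n}$-coefficient $\zeta^{n+1}\lambda^{n+1}$.
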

\begin{proof}
Let $\phi=\seq{\phi_{i,j}}\in\Aut (\mu_0^n)$. Then, 
\begin{equation*}
\phi\cdot\lb{\nabla_n+\mu\Delta_{i, 1}}=\phi_{1, 1}^{n+1}\lb{\nabla_n}+\sum_{k=2}^i\phi_{i, k}\phi_{1,1}\mu\lb{\Delta_{k, 1}}.
\end{equation*}
So, as before, to prove the result it suffices to assume that $i\geq 3$ and to show that given $\phi_{1, 1}, \mu\in\FF^*$, the elements $\phi_{i, k}\phi_{1,1}\mu$, with $2\leq k\leq i-1$, can take on arbitrary values in $\FF$, for appropriate choices of $\phi_{2, 1}, \ldots, \phi_{i-1, 1}\in\FF$. 

Considering $\rho_{i-2}$, we know that for any $\lambda_2, \ldots, \lambda_{i-1}\in\FF$ there are $i+1$ solutions to the equation
\begin{equation*}
\rho_{i-2}(a_{1,1}, \ldots, a_{i-1, 1})=\seq{\phi_{1, 1}^{i+1}, \frac{\lambda_2}{\mu}, \ldots, \frac{\lambda_{i-1}}{\mu}}.
\end{equation*}
As $a_{1, 1}^{i+1}=\phi_{1, 1}^{i+1}$, exactly one of the above solutions satisfies $a_{1, 1}=\phi_{1, 1}$ and the remainder of the proof goes as before.
\end{proof}

The final ingredient in the proof of Theorem~\ref{T:action:orbits:lcom} explains the relevance of the factor group $\FF_{(i, n)}$, defined in \eqref{E:def:fin}.

\begin{lemma}\label{L:orbit:param:mubar}
Fix $2\leq i\leq n-1$. For $\mu, \mu'\in \FF^*$ we have
\begin{equation*}
\orb \seq{\lb{\nabla_n+\mu\Delta_{i, 1}}}= \orb  \seq{\lb{\nabla_n+\mu'\Delta_{i, 1}}}\iff \overline{\mu}=\overline{\mu'},
\end{equation*}
where $\overline{\mu}, \overline{\mu'}\in\FF_{(i, n)}$.
\end{lemma}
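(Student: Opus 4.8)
The plan is to reduce the statement to a concrete arithmetic condition on $\mu$ and $\mu'$ using the orbit description already computed in Proposition~\ref{T:action:orbits:lcom:3}. By that result, the orbit of $\lb{\nabla_n+\mu\Delta_{i, 1}}$ consists precisely of the classes
\[
\lambda^{n+1}\lb{\nabla_n}+\lambda^{i+1}\mu\lb{\Delta_{i, 1}}+\sum_{j=2}^{i-1}\lambda_{i+1-j}\lb{\Delta_{j, 1}},
\]
with $\lambda\in\FF^*$ and $\lambda_2, \ldots, \lambda_{i-1}\in\FF$ free. The key observation is that, since $\pb{\lb{\Delta_{j, 1}}\mid 2\leq j\leq n}\cup\pb{\lb{\nabla_n}}$ is a basis of $\HH{LC}(\mu_0^n, \FF)$ by Proposition~\ref{P:hh:lbc}\ref{P:hh:lbc:b}, the coefficients appearing above are genuine coordinates, so two such expressions are equal if and only if their coordinates match. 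In particular, the coefficient of $\lb{\nabla_n}$ and that of $\lb{\Delta_{i, 1}}$ are rigidly linked: if a representative of $\orb\seq{\lb{\nabla_n+\mu\Delta_{i, 1}}}$ happens to lie in $\orb\seq{\lb{\nabla_n+\mu'\Delta_{i, 1}}}$, then comparing the $\lb{\nabla_n}$-coordinate forces $\lambda^{n+1}=(\lambda')^{n+1}$ for the two scaling parameters, while comparing the $\lb{\Delta_{i, 1}}$-coordinate forces $\lambda^{i+1}\mu=(\lambda')^{i+1}\mu'$. The lower coordinates (those of $\lb{\Delta_{j, 1}}$ for $2\leq j\leq i-1$) impose no constraint, as they range over all of $\FF$ independently.

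\textbf{The forward direction.}
Assume the two orbits coincide. First I would note that it suffices to test equality against a single convenient representative: take the representative with $\lambda=1$ and all $\lambda_j=0$ in the first orbit, namely $\lb{\nabla_n+\mu\Delta_{i, 1}}$ itself, and require it to lie in the second orbit. This yields the existence of $\lambda\in\FF^*$ (playing the role of $\lambda'$) with $\lambda^{n+1}=1$ and $\lambda^{i+1}\mu'=\mu$, the lower coordinates being automatically matchable by choosing the free $\lambda_j$. Hence $\lambda$ is an $(n+1)$-st root of unity and $\mu(\mu')^{-1}=\lambda^{i+1}$ is an $(i+1)$-st power of such a root of unity, which is exactly the statement that $\mu(\mu')^{-1}\in R(i, n)$, i.e.\ $\overline{\mu}=\overline{\mu'}$ in $\FF_{(i, n)}$.

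\textbf{The reverse direction and the main obstacle.}
Conversely, suppose $\overline{\mu}=\overline{\mu'}$, so that $\mu=\lambda^{i+1}\mu'$ for some $(n+1)$-st root of unity $\lambda$. Taking this $\lambda$ in the orbit description of $\orb\seq{\lb{\nabla_n+\mu'\Delta_{i, 1}}}$ with all $\lambda_j=0$ produces the element $\lambda^{n+1}\lb{\nabla_n}+\lambda^{i+1}\mu'\lb{\Delta_{i, 1}}=\lb{\nabla_n}+\mu\lb{\Delta_{i, 1}}$, which is the chosen representative of the first orbit; hence the two orbits intersect and, being orbits, coincide. The step I expect to require the most care is the \emph{consistency} built into the coordinates: one must verify that a single scaling parameter $\lambda$ simultaneously governs both the $\lb{\nabla_n}$-coordinate (via $\lambda^{n+1}$) and the $\lb{\Delta_{i, 1}}$-coordinate (via $\lambda^{i+1}$), and that these two are not independent. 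This is precisely why the quotient is by the subgroup of $(i+1)$-st powers of $(n+1)$-st roots of unity rather than by a larger or smaller group; confirming that the constraint $\lambda^{n+1}=1$ is both necessary and available (using that $\FF$ is algebraically closed of characteristic $0$, so all $(n+1)$-st roots of unity exist) is the crux of the argument. The freedom in the $\lambda_j$ is routine and only needs to be invoked to absorb the lower-order coordinates.
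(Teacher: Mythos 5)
Your proof is correct and follows essentially the same route as the paper: both directions are obtained by applying the orbit description of Proposition~\ref{T:action:orbits:lcom:3} and matching the $\lb{\nabla_n}$- and $\lb{\Delta_{i,1}}$-coefficients, which are governed by the single parameter $\lambda$ via $\lambda^{n+1}$ and $\lambda^{i+1}$ respectively. The extra discussion of the basis of $\HH{LC}(\mu_0^n,\FF)$ and of the free lower-order coefficients is a sound (if more verbose) justification of the coefficient comparison the paper performs implicitly.
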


\begin{remark}
In view of this result, it makes sense to write $\orb \seq{\lb{\nabla_n+\overline{\mu}\Delta_{i, 1}}}$, for $\mu\in\FF^*$, and we can parameterize the orbits of the form above by the elements of $\FF_{(i, n)}$.
\end{remark}

\begin{proof}
For the direct implication, suppose that $\lb{\nabla_n+\mu'\Delta_{i, 1}}\in\orb \seq{\lb{\nabla_n+\mu\Delta_{i, 1}}}$. Then there is some $\lambda\in\FF^*$ such that $\lambda^{n+1}=1$ and $\mu'=\lambda^{i+1}\mu$, so $\mu'/\mu\in R(i, n)$.

Conversely, if $\mu'=\lambda^{i+1}\mu$ for some $\lambda\in\FF^*$ with $\lambda^{n+1}=1$, then
\begin{equation*}
\lb{\nabla_n+\mu'\Delta_{i, 1}}
=\lambda^{n+1}\lb{\nabla_n}+\lambda^{i+1}\mu\lb{\Delta_{i, 1}}
\in\orb \seq{\lb{\nabla_n+\mu\Delta_{i, 1}}},
\end{equation*}
which shows that the respective orbits coincide.
\end{proof}

We are now ready to conclude our main result of this section.

\begin{proof}[Proof of Theorem~\ref{T:action:orbits:lcom}]
Let $0\neq\theta=\lambda_1\lb{\nabla_n}+\sum_{j=2}^n \lambda_j \lb{\Delta_{j,1}}\in\HH{LC}(\mu_0^n, \FF)$. Then:
\begin{itemize}
\item If $\lambda_1=0$, then $\theta\in\orb\seq{\lb{\Delta_{i, 1}}}$, where $i=\max\pb{j\mid \lambda_j\neq 0}$. (See Proposition~\ref{T:action:orbits:lcom:1}.)
\item If $\lambda_1\neq0$ and $\lambda_n\neq 0$, then $\theta\in\orb\seq{\lb{\nabla_n+\mu\Delta_{n, 1}}}$, where $\mu=\lambda_n/\lambda_1\neq 0$. (See Proposition~\ref{T:action:orbits:lcom:2}.)
\item If $\lambda_1\neq0$ and $\lambda_j=0$ for all $2\leq j\leq n$, then $\theta\in\orb\seq{\lb{\nabla_n}}$. (See Lemma~\ref{L:phi:nabla}.)
\item If $\lambda_1\neq0$, $\lambda_n=0$ and $\lambda_j\neq0$ for some $2\leq j\leq n-1$, then $\theta\in\orb  \seq{\lb{\nabla_n+\overline{\mu}\Delta_{i, 1}}}$, where $i=\max\pb{2\leq j\leq n-1\mid \lambda_j\neq 0}$ and $\mu\in\FF^*$ is defined as follows: choose some $\lambda\in\FF^*$ such that $\lambda^{n+1}=\lambda_1$; then take $\mu=\lambda_i/\lambda^{i+1}$. (See Proposition~\ref{T:action:orbits:lcom:3}.) By Lemma~\ref{L:orbit:param:mubar}, $\overline{\mu}$ is uniquely determined by $\theta$. 
\end{itemize}
The fact that distinct elements given in the statement of the Theorem yield disjoint orbits follows easily from the description of the orbits in Propositions~\ref{T:action:orbits:lcom:1}, \ref{T:action:orbits:lcom:2}, \ref{T:action:orbits:lcom:3} and Lemmas~\ref{L:phi:nabla}, \ref{L:orbit:param:mubar}.
\end{proof}

\subsection{The orbit decomposition of $T_1(\mu_0^n)$ in the left commutative and bicommutative varieties}\label{SS:action:orbits:grassm}

Observe that $\ann(\mu_0^n)=\FF e_n$, so in order to get a non-split central extension of $\mu_0^n$ with  $1$-dimensional annihilator associated with a cocycle $\theta$, we must have $e_n\notin\ann(\theta)$. This excludes the orbit representatives $0$ and $\Delta_{i, 1}$, for $i<n$. Notice also that for nonzero cocycles $\theta$ and $\theta'$, the $1$-dimensional spaces $\langle\lb{\theta}\rangle$ and $\langle\lb{\theta'}\rangle$ are in the same $\Aut(\mu_0^n)$-orbit in the corresponding Grassmannian if and only if there is $\lambda\in\FF^*$ such that $\lambda\lb{\theta'}$ is in the orbit of $\lb{\theta}$. In particular, if the $\Aut(\mu_0^n)$-orbit of $\lb{\theta}$ is closed under the multiplicative action of $\FF^*$, then $\langle\lb{\theta}\rangle$ and $\langle\lb{\theta'}\rangle$ are in the same orbit if and only if $\lb{\theta}$ and $\lb{\theta'}$ are in the same orbit.

In our context, all orbits are closed under multiplication by nonzero scalars, except for the orbits of the form $\orb \seq{\lb{\nabla_n+\overline{\mu}\Delta_{i, 1}}}$, with $2\leq i\leq n-1$ and $\overline{\mu}\in\FF_{(i, n)}$. Given $\epsilon\in\FF^*$, Proposition~\ref{T:action:orbits:lcom:3} shows that $\epsilon^{n+1}\lb{\nabla_n+\overline{\mu}\Delta_{i, 1}}=\lb{\nabla_n+\overline{\epsilon^{n-i}\mu}\Delta_{i, 1}}$. Since $\FF$ is algebraically closed, for a fixed $i\leq n-1$, $\epsilon^{n-i}$ can take on any nonzero value as $\epsilon\in\FF^*$ varies. Hence, $\lb{\nabla_n+\overline{\mu}\Delta_{i, 1}}$ and $\lb{\nabla_n+\overline{\mu'}\Delta_{j, 1}}$ define $1$-dimensional spaces in the same $\Aut(\mu_0^n)$-orbit of the Grassmannian if and only if $i=j$. 

Combining all the results from Section~\ref{S:lbc}, we obtain our main result below. 

\begin{thm}\label{T:action:orbits:Grass}
Assume that $\FF$ is algebraically closed of characteristic $0$ and let $n\geq 2$. The following elements of $\Z{LC}(\mu_0^n, \FF)$ parameterize the distinct orbits of $\Aut(\mu_0^n)$ on the subspace $T_1(\mu_0^n)$ of the Grassmannian $G_1(\HH{LC}(\mu_0^n,{\FF}))$, i.e.\ they parameterize the distinct isomorphism classes of non-split left commutative central extensions of the $n$-dimensional null-filiform algebra $\mu_0^n$ with $1$-dimensional annihilator:
\begin{enumerate}[label=\textup{(\alph*)}]
\item $\Delta_{n, 1};$
\item $\pb{\nabla_n+\mu\Delta_{n, 1}\mid \mu\in\FF}$ $(\mu=0$ gives the trivial extension $\mu_0^{n+1});$
\item $\pb{\nabla_n+\Delta_{i, 1}\mid 2\leq i\leq n-1}.$
\end{enumerate} 
In the bicommutative case, the corresponding representatives are:

%\begin{description}
%\item[(Case $n=2$)]\hfill
%\begin{enumerate}[label=\textup{(\alph*)}]
%\item $\Delta_{2, 1}$;
%\item $\pb{\nabla_2+\mu\Delta_{2, 1}\mid \mu\in\FF}$.
%\end{enumerate}
%\item[(Case $n>2$)]\hfill
%\begin{enumerate}[label=\textup{(\alph*)}]
%\item $\nabla_n$;
%\item $\nabla_n+\Delta_{2, 1}$.
%\end{enumerate}
%\end{description}
\begin{varwidth}[t]{.5\textwidth}
\textbf{(Case $n=2$)}
\begin{enumerate}[label=\textup{(\alph*)}]
\item $\Delta_{2, 1};$
\item $\pb{\nabla_2+\mu\Delta_{2, 1}\mid \mu\in\FF}$\\ $(\mu=0$ gives the trivial extension $\mu_0^{3}).$
\end{enumerate}
\end{varwidth}% 
\hspace{4em}% 
\begin{varwidth}[t]{.5\textwidth}
\textbf{(Case $n>2$)}
\begin{enumerate}[label=\textup{(\alph*)}]
\item $\nabla_n$ $($trivial extension $\mu_0^{n+1});$
\item $\nabla_n+\Delta_{2, 1}.$
\end{enumerate}
\end{varwidth}
\end{thm}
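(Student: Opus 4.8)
The plan is to assemble Theorem~\ref{T:action:orbits:Grass} directly from the orbit decomposition of $\HH{LC}(\mu_0^n, \FF)$ in Theorem~\ref{T:action:orbits:lcom}, refined by the two reductions described in the opening paragraph of this subsection. First I would invoke the observation that $\ann(\mu_0^n)=\FF e_n$, so that an admissible cocycle $\theta$ (one whose extension has $1$-dimensional annihilator, i.e.\ $\theta\in T_1(\mu_0^n)$) must satisfy $e_n\notin\ann(\theta)$. Concretely, since $\nabla_n(e_k,\cdot)$ is nonzero precisely when the second argument lands in $e_{n+1-k}$ and $\Delta_{n,1}$ pairs nontrivially with $e_n$, I would check that the representatives $0$ and $\Delta_{i,1}$ with $i<n$ all annihilate $e_n$, hence are excluded, whereas each remaining representative from Theorem~\ref{T:action:orbits:lcom} does not. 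This prunes the list from Theorem~\ref{T:action:orbits:lcom} down to the candidates involving $\Delta_{n,1}$ or $\nabla_n$.

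Next I would pass from orbits on $\HH{LC}$ to orbits on the Grassmannian $G_1(\HH{LC})$, i.e.\ quotient further by the multiplicative action of $\FF^*$ on lines. The key dichotomy is whether a given $\Aut(\mu_0^n)$-orbit is already closed under scaling by $\FF^*$. For the orbits $\orb\seq{\lb{\Delta_{n,1}}}$ and $\orb\seq{\lb{\nabla_n+\mu\Delta_{n,1}}}$, Propositions~\ref{T:action:orbits:lcom:1} and~\ref{T:action:orbits:lcom:2} show the $\lambda_1$-scaling is already built into the orbit (the free parameter $\lambda_1\in\FF^*$ rescales the whole class), so these orbits are $\FF^*$-stable and survive verbatim as distinct Grassmannian orbits. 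The delicate case is the family $\orb\seq{\lb{\nabla_n+\overline{\mu}\Delta_{i,1}}}$ for $2\leq i\leq n-1$: here I would use the computation $\epsilon^{n+1}\lb{\nabla_n+\overline{\mu}\Delta_{i,1}}=\lb{\nabla_n+\overline{\epsilon^{n-i}\mu}\Delta_{i,1}}$ and the fact that over an algebraically closed field $\epsilon^{n-i}$ ranges over all of $\FF^*$ as $\epsilon$ does. This collapses the entire $\FF_{(i,n)}$-worth of distinct cohomology orbits into a \emph{single} Grassmannian orbit for each fixed $i$, so the parameter $\overline{\mu}$ disappears and the representative may be normalized to $\nabla_n+\Delta_{i,1}$.

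I would then record that these Grassmannian orbits for distinct $i$ remain distinct, since the scaling argument only mixes the coset $\overline{\mu}$ and never changes the index $i=\max\pb{2\leq j\leq n-1\mid \lambda_j\neq 0}$, which is an $\Aut(\mu_0^n)$- and $\FF^*$-invariant by Proposition~\ref{T:action:orbits:lcom:3}. Collecting the surviving representatives yields precisely items (a), (b), (c) of the left commutative list, with $\mu=0$ in (b) recovering the trivial extension $\mu_0^{n+1}$ (this last identification is just the observation that $\nabla_n$ alone builds the associative extension, consistent with $\HH{AS}=\langle[\nabla_n]\rangle$). For the bicommutative case I would restrict to the two-dimensional space $\HH{BC}(\mu_0^n,\FF)$ with basis $\pb{\lb{\Delta_{2,1}},\lb{\nabla_n}}$ from Proposition~\ref{P:hh:lbc}\ref{P:hh:lbc:c} and run the same pruning against $\ann(\theta)\not\ni e_n$ on Corollary~\ref{C:action:orbits:bicom}; for $n>2$ the only index available in item (d) there is $i=2$, and the scaling collapse forces $\overline{\mu}$ into a single class, leaving exactly $\nabla_n$ and $\nabla_n+\Delta_{2,1}$, while the $n=2$ case is read off directly since $\Delta_{2,1}=\Delta_{n,1}$ and no intermediate indices occur.

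The main obstacle I expect is the Grassmannian collapse for $2\leq i\leq n-1$: one must be careful that passing from $\HH{LC}$-orbits to line-orbits genuinely merges the $\FF_{(i,n)}$-family into one orbit rather than leaving residual invariants. The crux is verifying that the surjectivity of $\epsilon\mapsto\epsilon^{n-i}$ on $\FF^*$ (which needs $\FF$ algebraically closed) exactly cancels the obstruction measured by $R(i,n)$, so that the quotient $\FF_{(i,n)}$ becomes trivial at the level of lines. Everything else is bookkeeping on top of the already-established Propositions~\ref{T:action:orbits:lcom:1}--\ref{T:action:orbits:lcom:3} and Lemmas~\ref{L:phi:nabla}, \ref{L:orbit:param:mubar}.
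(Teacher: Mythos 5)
Your proposal is correct and follows essentially the same route as the paper: prune the list from Theorem~\ref{T:action:orbits:lcom} by the condition $e_n\notin\ann(\theta)$, then pass to the Grassmannian by checking which $\Aut(\mu_0^n)$-orbits are $\FF^*$-stable, using the identity $\epsilon^{n+1}\lb{\nabla_n+\overline{\mu}\Delta_{i, 1}}=\lb{\nabla_n+\overline{\epsilon^{n-i}\mu}\Delta_{i, 1}}$ and algebraic closedness to collapse the $\FF_{(i,n)}$-family to the single representative $\nabla_n+\Delta_{i,1}$ for each $2\leq i\leq n-1$. Your treatment of the bicommutative case and of the invariance of the index $i$ likewise matches the paper's argument.
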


\begin{remark}\label{R:upperdimension}
Note that Theorem~\ref{T:action:orbits:Grass} and the previous discussion only make reference to non-split central extensions with $1$-dimensional annihilator. However, it is also possible to construct non-split left commutative central extensions of $\mu_0^n$ with $2$-dimensional annihilator, whose isomorphism classes are parameterized by the cocycles $\Delta_{k,1}$ for $2\leq  k\leq n-1$. In the bicommutative case for $n>2$, also $\Delta_{2,1}$ is a representative of an isomorphism class of non-split central extensions with $2$-dimensional annihilator.
\end{remark}

The explicit description of the multiplication in the central extensions referenced in Theorems~\ref{T:action:orbits:lcom} and \ref{T:action:orbits:Grass}, in Corollary~\ref{C:action:orbits:bicom} and in Remark~\ref{R:upperdimension} can be found in the following table. Only the nonzero products of the basis elements $\pb{e_1, \ldots, e_{n+1}}$ are displayed.
% Note that $e_{n+1}$ is in the annihilator of the extension, so only products with $i, j\in [n]$ appear.

\begin{table}[hbt!]
\begin{center}
\begin{tabular}{l@{\qquad}ll}\hline\\[-10pt]
cocycle & multiplication, $i, j\in [n]$&  \\[3pt] \hline\hline
$\Delta_{n, 1}$& $e_i e_j=e_{i+j}$ & if $i+j\leq n$ \\[3pt] 
& $e_n e_1=e_{n+1}$ & \\[3pt] \hline
$\Delta_{k, 1}$  & $e_i e_j=e_{i+j}$ & if $i+j\leq n$ and $(i, j)\neq (k, 1)$ \\[3pt] 
($2\leq k\leq n-1$)&$e_k e_1=e_{k+1}+e_{n+1}$&\\[3pt]  \hline
$\nabla_n+\Delta_{k, 1}$  & $e_i e_j=e_{i+j}$ & if $i+j\leq n+1$ and $(i, j)\neq (k, 1)$ \\[3pt] 
($2\leq k\leq n-1$)&$e_k e_1=e_{k+1}+e_{n+1}$&\\[3pt]  \hline
$\nabla_n+\mu\Delta_{n, 1}$ & $e_i e_j=e_{i+j}$ & if $i+j\leq n+1$ and $i\neq n$  \\[3pt]  
($\mu\in\FF$)&$e_n e_1=(1+\mu)e_{n+1}$ &  ($\mu=0$ gives the trivial extension)  \\[3pt]\hline
\end{tabular}
\end{center}
\caption{Isomorphism classes of $1$-dimensional non-split left commutative and bicommutative central extensions of the $n$-dimensional null-filiform algebra $\mu_0^n$.}
\label{table1}
\end{table}%

%\begin{table}[hbt!]
%\begin{center}
%\begin{tabular}{l@{\qquad}l@{\qquad}l}\hline\\[-10pt]
%cocycle & multiplication& \\
%&$i, j\in [n]$& \\[3pt] \hline\hline
%$\Delta_{k, 1}$  & $e_i e_j=e_{i+j}$ & if $i+j\leq n$ and $(i, j)\neq (k, 1)$ \\[3pt] 
%($2\leq k\leq n-1$)&$e_k e_1=e_{k+1}+e_{n+1}$&\\[3pt]  \hline
%$\Delta_{n, 1}$& $e_i e_j=e_{i+j}$ & if $i+j\leq n$ \\[3pt] 
%& $e_n e_1=e_{n+1}$ & \\[3pt] \hline
%$\nabla_n+\Delta_{k, 1}$  & $e_i e_j=e_{i+j}$ & if $i+j\leq n+1$ and $(i, j)\neq (k, 1)$ \\[3pt] 
%($2\leq k\leq n-1$)&$e_k e_1=e_{k+1}+e_{n+1}$&\\[3pt]  \hline
%$\nabla_n+\mu\Delta_{n, 1}$ & $e_i e_j=e_{i+j}$ & if $i+j\leq n+1$ and $i\neq n$  \\[3pt]  
%($\mu\in\FF$)&$e_n e_1=(1+\mu)e_{n+1}$ &  ($\mu=0$ gives the trivial extension)  \\[3pt]\hline
%\end{tabular}
%\end{center}
%\caption{Isomorphism classes of non-split left commutative and bicommutative central extensions of the $n$-dimensional null-filiform algebra $\mu_0^n$ with $1$-dimensional annihilator. (Refer to Theorem~\ref{T:action:orbits:Grass} for details.)}
%\label{table1}
%\end{table}%

%\newpage
\section{Assosymmetric, Novikov, and left symmetric central extensions}

\subsection{Assosymmetric central extensions}

Recall that an algebra ${\rm A}$ is said to be assosymmetric if it satisfies the identities 
\[(xy)z-x(yz)=(yx)z-y(xz)=(xz)y-x(zy), \]  
for all $x,y,z\in{\rm A}$. Now, to find the assosymmetric central extensions of $\mu_0^n,$ we need cocycles $\theta\colon \mu_0^n\times \mu_0^n\to \FF$  satisfying  
\[ \theta(e_ie_j,e_k)-\theta(e_i,e_je_k)=\theta(e_je_i,e_k)-\theta(e_j,e_ie_k)=\theta(e_ie_k,e_j)-\theta(e_i,e_ke_j),\]
for all $i,j,k \in [n].$
Note that for the algebra $\mu_0^n$, these two equalities reduce to 
\[\theta(e_i,e_{j+k})=\theta(e_j,e_{i+k}) \quad \mbox{and}\quad  \theta(e_{i+j},e_k)=\theta(e_{i+k},e_j),\]
for all $i,j,k \in [n]$, with the usual convention that $e_m=0$ if $m>n$.
This means that every assosymmetric cocycle of $\mu_0^n$ is in $\Z{BC}(\mu_0^n, \FF).$ 
On the other hand, it is easy to see that every element from $\Z{BC}(\mu_0^n, \FF)$ is an assosymmetric cocycle.

\subsection{Novikov central extensions}

Recall that an algebra ${\rm A}$ is said to be Novikov if it satisfies the identities 
\[(xy)z=(xz)y \quad \mbox{and}\quad(xy)z-x(yz)=(yx)z-y(xz),\] 
for all $x,y,z\in{\rm A}$. Now, to find all the Novikov central extensions  of $\mu_0^n,$ we require cocycles $\theta\colon \mu_0^n\times \mu_0^n\to \FF$ satisfying 
\[\theta(e_ie_j,e_k)=\theta(e_ie_k,e_j)\quad\mbox{and}\quad\theta(e_ie_j,e_k)-\theta(e_i,e_je_k)=\theta(e_je_i,e_k)-\theta(e_j,e_ie_k),\]
for all $i,j,k \in [n].$
For the algebra $\mu_0^n$, these identities reduce to 
\[\theta(e_{i+j},e_k)=\theta(e_{i+k},e_j)\quad \mbox{and}\quad\theta(e_i,e_{j+k})=\theta(e_j,e_{i+k}), \] 
with $e_m=0$ if $m>n$.  As these are exactly the identities for the bicommutative cocycles of $\Z{BC}(\mu_0^n, \FF)$, this case also reduces to the bicommutative case.

\subsection{Left symmetric central extensions}

Recall that an algebra ${\rm A}$ is said to be left symmetric if it satisfies the identity
\[(xy)z-x(yz)=(yx)z-y(xz),\] 
for all $x,y,z\in{\rm A}$. To find all the left symmetric central extensions  of $\mu_0^n,$ we need cocycles  $\theta \colon \mu_0^n\times \mu_0^n\to \FF$ satisfying 
\[\theta(e_ie_j,e_k)-\theta(e_i,e_je_k)=\theta(e_je_i,e_k)-\theta(e_j,e_ie_k),\]
for all $i,j,k \in [n].$
Note that for the algebra $\mu_0^n$, we obtain just the relation
\[ \theta(e_i,e_{j+k})=\theta(e_j,e_{i+k}), \quad \mbox{for all $i,j,k \in [n]$},\] where $e_m=0$ if $m>n$. Thus, $\theta$ is the same as a left commutative cocycle from $\Z{LC}(\mu_0^n, \FF)$, and it follows that this case reduces to the left commutative case.

%\newpage
%%%%%%%%%%%%%

%:bibliography  

\def\cprime{$'$} \def\cprime{$'$} \def\cprime{$'$}

\end{document}